\newtheorem{thm}{Theorem}[section]
\newtheorem{deff}[thm]{Definition}
\newtheorem{rem}[thm]{Remark}
\newtheorem{prop}[thm]{Proposition}
\newtheorem{cor}[thm]{Corollary}
\newtheorem{que}[thm]{Question}
\newcommand{\vG}{\varGamma}
\newcommand{\ve}{\varepsilon}
\newcommand{\vO}{\varOmega}
\newcommand{\vS}{\varSigma}
\newcommand{\Tau}{\mathcal{T}}
\newcommand{\ov}{\overline}
\newcommand{\mg}{\marginpar}
\def\N{{\mathbb N}}
\def\mcA{\mathcal A}
\def\be{\begin{equation}}
 \newcommand{\dint}{\displaystyle{\int}}
\def\ee{\end{equation}}
\def\ba*{\begin{eqnarray*}}
	\def\ea*{\end{eqnarray*}}
\def\ba{\begin{eqnarray}}
\def\ea{\end{eqnarray}}
\begin{document}

\title[Convergence of measures]{ Convergence for varying measures }
 \subjclass[2020]{Primary 28B20; Secondary 26E25, 26A39, 28B05, 46G10, 54C60, 54C65.}
 \keywords{setwise convergence, convergence in total variation, functional analysis, uniform integrability, absolute integrability,  Pettis integral, multifunction.}
\author{L. Di Piazza, V. Marraffa,
  K. Musia{\l}, A. R. Sambucini}
\begin{abstract}
Some limit theorems
of the  type
\[\dint_{\Omega}f_n\,dm_n   \rightarrow  \dint_{\Omega}f \,dm\]
 are presented for scalar, (vector), (multi)-valued sequences of $m_n$-integrable functions $f_n$.
The convergences obtained, in the vector and multivalued settings,  are in the weak or in the strong sense. 

\end{abstract}
\newcommand{\Addresses}{{
  \bigskip
  \footnotesize
\textit{Luisa Di Piazza and Valeria Marraffa}:
 Department of Mathematics, University of Palermo, Via Archirafi 34, 90123 Palermo, (Italy).
 Emails: luisa.dipiazza@unipa.it, valeria.marraffa@\-unipa.it, Orcid ID: 0000-0002-9283-5157,
0000-0003-1439-5501
\\
\textit{Kazimierz Musia{\l}}:
  Institut of Mathematics, Wroc{\l}aw University, Pl. Grunwaldzki  2/4, 50-384 Wroc{\l}aw, (Poland).
  Email: kazimierz.musial@math.uni.wroc.pl, Orcid ID: 0000-0002-6443-2043 \\
\textit{Anna Rita Sambucini
}:
 Department of Mathematics and Computer Sciences, 06123 Perugia, (Italy).
 Email: anna.sambucini@unipg.it, Orcid ID: 0000-0003-0161-8729; ResearcherID: B-6116-2015.
}}
\date{\today}

\maketitle

\section{Introduction}\label{s-intro}

Many problems in measure theory and its  applications deal with sequences of 
measures $(m_n)_n$ converging in some sense rather than with a single  measure $m$.
 Convergence results 
have significant applications to various fields  of pure and applied sciences.   Examples
of areas of applications include
stochastic processes, statistics, control and game theories,   transportation problems, neural networks, 
 signal and image processing
(see, for example, \cite{avendano,F17,F19,danilo,xdanilo,nn1,Pap-,sokol,Fein-arX1902}).

In particular, for the last applications, recently,  multifunctions have been applied   because
the discretization of a continuous signal or image  is affected by
 quantization errors (\hskip-.06cm \cite{mesiar,latorre}) and  its  numerical discretization can be viewed as an
 approximation by means of  a suitable sequence of  multifunctions $(\Gamma_n)_n$
 (as happens in case of scalar functions  \cite{latorre}) which converges
to a (multi)-signal $\Gamma$ corresponding to the original  signal.
 Obviously, since  the  signals are
 discontinuous (\hskip-.06cm \cite{MIMMO,carlo})  suitable convergence notions  are needed.

In the present  paper
we continue the research started in \cite {lerma,lasserre,serfozo,Fein-jmaa,Ma,Fein-arX1807,Fein-arX1902,F17}
for the scalar case
and we  provide sufficient conditions in order to  obtain some kind of Vitali's convergence theorems for a sequence of
 (multi)functions $(f_n)_n$  integrable  with respect to a sequence of measures $(m_n)_n$ . In particular
we  consider   the asympotic properties of $(\int_{\Omega} f_n d m_n)_n$ with respect to setwise
and in total variation  convergences of the measures
 in a arbitrary measurable spaces
$(\vO, \mathcal{A})$. 

The paper is organized as follows: in Section \ref{due}, 
we consider the case of the scalar integrands  and 
an analogous of the Vitali's classic convergence result is obtained for  finite  and non negative measures
in Theorem \ref{Th1} using the uniform absolute continuity of the involved integrals
 and the setwise convergence of measures. We compare also our results with the existing ones in literature and
we extend our result to signed measures in Corollary \ref{Th1s}. 
In Section \ref{quattro}
we consider multifunctions taking values in 
 the hyperspace of non empty, weakly compact and convex subsets  of a  Banach space 
and limit results  for the Pettis integral  are  provided both
 in the weak sense (Theorem \ref{Thmulti}) and in the strong sense making use of the Hausdorff metric,
 by means of the convergence in total variation
of measures and the scalar equi-convergence in measure of the sequence of multifunctions (Theorem \ref{Thmulti2}).\\
In Subsection \ref{tre} we get the analogous results when the integrands are vector valued functions.
In this setting   we obtain the  converge both  in the weak sense (Theorem \ref{Th2v}) and in the strong sense
(Theorem \ref{Th1m}).
 Finally in subsection \ref{Mcshane} we consider the McShane integration and in this setting we obtain a convergence result for
 the vector case (Theorem \ref{ThMcSequi}). If $\Gamma_n$'s are McShane integrable multifunction and $i:cb(X)\to \ell^{\infty}(B_{X^*})$ 
is the R{\aa}dstr\"{o}m  embedding, then the vector functions $i \circ \Gamma_n$'s are also McShane integrable and viceversa. So the assumptions for multifunctions
 can be then translated for $i \circ \Gamma_n$'s and we can get the convergence
 from the vector case. 
Unfortunately, in general the  R{\aa}dstr\"{o}m  embedding of a Pettis integrable multifunction, even weakly compact valued, is not necessary 
Pettis integrable. So the McShane integrability is essential for this reverse investigation.

\section{The scalar case for integrands}\label{due}

Let   $(\vO,  \mathcal{A}) $ be a measurable space and
let  $\mathcal{M}(\vO)$ br the vector space of {\it finite real-valued measures } on  $(\vO,  \mathcal{A})$. $\mathcal{M}_+(\vO)$
 denotes the cone  of non-negative members of $\mathcal{M}(\vO)$. Let  $|m|$ be the  total variation of a measure $m$.
By the symbol $m \ll \nu$ we denote the usual   absolutely continuity of $m$ with respect to  $\nu$.
We recall that

\begin{itemize}

\item
  A sequence $(m_n)_n  \subset  \mathcal{M}(\vO)$ is {\it  setwise convergent} to $m  \in  \mathcal{M}(\vO)$ ($m_n \xrightarrow[]{s}  m$)
	 if  $\lim_n m_n (A) = m(A)$ for every $A \in \mathcal{A}$ (\cite[Section 2.1]{lerma}, \cite[Definition 2.3]{Fein-arX1902}).
\item

A sequence $(m_n)_n  \subset  \mathcal{M}(\vO)$ {\it converges   in total variation} to $m$ ($m_n\stackrel{tv}{\to}m$) if $|m-m_n|(\vO)\to 0$.
Then $(m_n)_n$ is   convergent to $m$ uniformly on $\vS$, (\cite[Section 2]{lasserre}).
\item
A sequence $(m_n)_n  \subset  \mathcal{M}(\vO)$ is bounded if $\sup_n|m_n|(\vO)<\infty$.

\end{itemize}

\begin{rem}\label{rem1}
\rm  Since simple functions are dense in the space of bounded measurable functions, a  sequence  $(m_n)_n$ of measures  setwise converges
 to $m$ if and only if 	
\[	\int_{\vO} f d m_n \to \int_{\vO} f d m, \,\,   \ \mbox{ for all  bounded  measurable  } \ \ f:  \vO \to \mathbb R.\]	
	
See also (\cite{Ma}). 	
\end{rem}	
If
$m$ is a     finite  signed measures $m$, we denote by $m^{\pm}$ its  positive and negative parts respectively.
We recall that every finite signed measure has finite total variation.
 Moreover we observe that
\begin{rem}\label{rem2}
 \rm
 \rm	Let $(m_n)_n, m$ be measures in $\mathcal{M}(\Omega)$.
If the sequences $(m_n^+)_n$ and $(m_n^-)_n$ are setwise convergent to $m^+$	and $m^-$ respectively, then  $(m_n)_n$
is setwise convergent to $m$. Unfortunately, the reverse implication fails
in general. In fact,
if the reverse implication were valid, then we would have the convergence  $m-m_n\stackrel{s}{\to}0$ and
 hence $(m-m_n)^\pm\stackrel{s}{\to}0$.
 Consequently $|m-m_n|(\vO)\to0$, and this is false.
As a counterexample one can take a sequence $(f_n)_n$ of functions in $L^1(\mu)$ that is weakly convergent to $f\in{L^1(\mu)}$
 but not strongly.
Then take $m_n(E):=\int_E f_n\, d\mu$ and $m(E)=\int_E f\,d\mu$. The convergence $|m-m_n|(\vO)\to0$ means $\int_E |f-f_n|\, d\mu\to 0$,
 which contradicts the assumption that $(f_n)_n$ is not convergent in the norm topology of $L^1(\mu)$.\\
%

\end{rem}

\begin{que} What is the relation between convergence in variation and setwise convergence of $(m_n^\pm)_n$ to $m^\pm$?
\end{que}
In general  $(m_n-m)^{\pm} \neq m_n^{\pm} - m^{\pm}$.
Using the Jordan decomposition of a measure and the triangular inequality we have that
\begin{eqnarray*}
m_n^+(E) -m^+(E) &=& \dfrac1{2} \bigg( |m_n|(E) + m_n(E) - |m|(E) - m(E) \bigg)
= \\ &=&
\dfrac1{2} ( |m_n|- |m|)(E) + \dfrac1{2}(m_n  - m)(E) \leq
\\ &\leq&
\dfrac1{2} \biggl| \, |m_n|- |m|\,  \biggr|(E) + \dfrac1{2}(m_n  - m)(E) \leq
\\ &\leq&
\dfrac1{2}  |m_n - m| (E) + \dfrac1{2}(m_n  - m)(E)=
(m_n - m)^+ (E).
\end{eqnarray*}
Moreover
\begin{eqnarray*}
0 \leq \bigl| m_n^+(E) -m^+(E) \bigr| &=&
\dfrac1{2} \biggl| |m_n|(E) + m_n(E) - |m|(E) - m(E) \biggr|
\leq  \\ &\leq&
\dfrac1{2} \biggl| |m_n|(E) - |m|(E) \biggr|  + \dfrac1{2}\biggl| m_n(E)  - m(E) \biggr|\leq
\\ &\leq&
\dfrac1{2} \biggl| \, |m_n|- |m|\,  \biggr|(E) + \dfrac1{2}|m_n  - m|(E) \leq
\\ &\leq&
\dfrac1{2}  |m_n - m| (E) + \dfrac1{2}|m_n  - m|(E)=
|m_n - m| (E).
\end{eqnarray*}
Analogously we could prove the inequality with $(m_n-m)^-$ and so the convergence in (total) variation is stronger that the setwise
convergence of $(m_n^\pm)_n$ to $m^\pm$.

\begin{rem}\label{nota}
\rm
 Observe that if  $\nu_n = m_n - m$,\, with $m_n, m \in \mathcal{M}_+(\Omega)$ for every $n \in \mathbb{N}$,  then
$\nu_n^+ \leq m_m$ and $\nu_n^- \leq m$. In fact, if $(P_n, N_n)$ is a Hahn decomposition for $\nu_n$ then, for every $E \in \mcA$ it is
\begin{eqnarray}\label{nu}
\nu_n^+(E) &=& \nu_n(E \cap P_n) = m_m(E \cap P_n) - m(E \cap P_n) \leq
\\ &\leq& m_n(E \cap P_n) \leq m_n(E); \nonumber \\
\nu_n^-(E) &=& - \nu_n (E \cap N_n) = m(E \cap N_n) - m_n(E \cap N_n) \leq \nonumber \\
&\leq& m(E \cap N_n) \leq m(E). \nonumber
\end{eqnarray}
If $f$ is a non-negative function integrable with respect $m$ and $m_n$ for every $n \in \mathbb{N}$ then, for every $E \in \mcA$
\begin{eqnarray*}
\int_E f d\nu_n^+ \leq \int_E f dm_n, \qquad \int_E f d\nu_n^- \leq \int_E f dm.
\end{eqnarray*}
(This is true for simple functions $s$ and then we apply with $0 \leq s \leq f$.)
So,  if $f \in L^1(m_n) \cap L^1(m)$, then $f \in L^1(|\nu_n|)$.
Moreover, since
\begin{eqnarray*}
\int_E f d(m_n -m) = \int_E f dm_n - \int_E f dm,
\end{eqnarray*}
we have that, by formulas (\ref{nu})
\begin{eqnarray}\label{variaz}
 \left|\int_E f d\nu_n\right| &=&
\left| \int_E f dm_n - \int_E f dm \right|
= \left|\int_E f d\nu_n^+
- \int_E f d\nu_n^-
\right| \\
&\leq& \nonumber
\int_E |f| d\nu_n^+ + \int_E |f| d\nu_n^- = \int_E |f| d|\nu_n|  \leq
\\ &\leq& \nonumber
  \int_E |f| dm_n +  \int_E |f| dm < +\infty.
\end{eqnarray}
\phantom{a} \hfill $\Box$
\end{rem}

 From now on we assume that all the measures we consider belong to $\mathcal{M}_+(\Omega)$ unless otherwise specified.\\

Let's start by proving an analogue of Vitali's classic theorem for  varying measures.
We can have different versions of it depending on the assumptions we use on $(f_n)_n$ and on the varying measures  $(m_n)_n$.
\begin{deff}
	\rm Let  $(m_n)_n$ be  a sequence  of measures.
 We say that:
\begin{itemize}	
\item[\rm (u.a.c.)]
a  sequence of measurable functions $(f_n)_n: \vO \to \mathbb{R}$
		has {\it  uniformly absolutely   continuous  $(m_n)$-integrals   on $\vO$},  if for every $ \varepsilon  >0$ there exists $\delta >0$ such that 	
	 for every $n \in \mathbb{N}$			
	\begin{equation}\label{Pettis1}
			\left(A \in  {\mathcal A} \quad \mbox{and} \ \  m_n(A)<\delta \right)  \ \ \Longrightarrow \ \ \int_A|f_n|\,dm_n < \varepsilon.
	\end{equation}	
If $f_n=f$ for all $n\in\N$, then we say that $f$ has {\it  uniformly absolutely   continuous  $(m_n)$-integrals   on $\vO$.}\\
If $m_n=m$ for all $n\in\N$, then we say that $f_n's$ have {\it  uniformly absolutely   continuous  $m$-integrals   on $\vO$.}

\item[\rm (u.i.)]  a  sequence of measurable functions $(f_n)_n: \vO \to \mathbb{R}$
	is  {\it  uniformly $(m_n)$-integrable on $\vO$},  if
\begin{eqnarray}\label{UI}
\lim_{\alpha  \to +\infty } \sup_n \int_{\{|f_n| >\alpha\}} |f_n|\, dm_n \  =0.
\end{eqnarray}
If $f_n=f$ for all $n\in\N$, then we say that $f$ is  {\it  uniformly $(m_n)$-integrable on $\vO$.}\\
If $m_n=m$ for all $n\in\N$, then we say that $f_n's$ are  {\it  uniformly   $m$-integrable  on $\vO$.}
\end{itemize}	
\end{deff}
It is obvious that if a sequence $(f_n)_n$ of measurable fuctions is uniformly bounded, then it is uniformly $(m_n)_n$-integrable
 for an arbitrary $(m_n)_n$ such that $\sup_n m_n(\vO)<+\infty$.\\

 The following result is contained in Serfozo's paper  \cite[Lemma 2.5 (i) $\Longleftrightarrow$ (iii)]{serfozo} where the
author gives a different proof using Markov's inequality
and the tight $(m_n)_n$-integrability condition of $(f_n)_n$. Moreover,
 in \cite{serfozo}, the uniform absolute continuity is given in a  slightly different form,
but Serfozo's and our definitions are equivalent.

\begin{prop}\label{p4}
 Let  $(m_n)_n$  be  a bounded sequence of  measures
and 	 $(f_n)_n$ be
 a sequence of real valued measurable functions on $\vO$. Then, the sequence $(f_n)_n$
	is   uniformly $(m_n)$-integrable on $\vO$ if and only if it has uniformly absolutely continuous $(m_n)$-integrals and
\begin{equation}\label{e12}
\sup_n \int_{\Omega} |f_n|\, dm_n < +\infty\,.
\end{equation}
\end{prop}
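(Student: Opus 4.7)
The plan is to treat the two implications separately, using standard truncation arguments adapted to a varying sequence of measures; the boundedness hypothesis $M^\ast:=\sup_n m_n(\vO)<+\infty$ enters only in one direction, while Markov's inequality is the engine of the other.

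For the implication (u.i.)\ $\Longrightarrow$ (u.a.c.)\ plus \eqref{e12}, I would first establish the uniform $L^1$-bound. Pick $\alpha_0$ from \eqref{UI} so that $\sup_n\int_{\{|f_n|>\alpha_0\}}|f_n|\,dm_n\le 1$. Splitting $\vO$ into $\{|f_n|>\alpha_0\}$ and its complement yields
\[
\int_{\vO}|f_n|\,dm_n\le 1+\alpha_0\, m_n(\vO)\le 1+\alpha_0\, M^\ast,
\]
which gives \eqref{e12}. For the uniform absolute continuity, given $\varepsilon>0$ I would choose $\alpha$ so that $\sup_n\int_{\{|f_n|>\alpha\}}|f_n|\,dm_n<\varepsilon/2$ and set $\delta:=\varepsilon/(2\alpha)$. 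Then, for $A\in\mathcal A$ with $m_n(A)<\delta$, the same split gives
\[
\int_A|f_n|\,dm_n\le\int_{\{|f_n|>\alpha\}}|f_n|\,dm_n+\alpha\, m_n(A)<\frac{\varepsilon}{2}+\alpha\delta=\varepsilon.
\]

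For the converse, set $M:=\sup_n\int_{\vO}|f_n|\,dm_n<+\infty$. Given $\varepsilon>0$, let $\delta>0$ be furnished by (u.a.c.) for this $\varepsilon$. By Markov's inequality,
\[
m_n\bigl(\{|f_n|>\alpha\}\bigr)\le\frac{1}{\alpha}\int_{\vO}|f_n|\,dm_n\le\frac{M}{\alpha},
\]
so choosing $\alpha_0>M/\delta$ forces $m_n(\{|f_n|>\alpha\})<\delta$ for all $n$ whenever $\alpha\ge\alpha_0$. Applying the uniform absolute continuity to the set $A=\{|f_n|>\alpha\}$ (which depends on $n$, but this is harmless because the $\delta$ is independent of $n$) yields $\int_{\{|f_n|>\alpha\}}|f_n|\,dm_n<\varepsilon$ uniformly in $n$, i.e.\ (u.i.).

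The argument is essentially routine; there is no serious obstacle. The only point that deserves care is that both definitions quantify uniformly over $n$ with the measures themselves varying, so one must be sure not to accidentally rely on a single fixed measure: the $\delta$ from (u.a.c.) must work simultaneously for all $m_n$, and boundedness $\sup_n m_n(\vO)<+\infty$ is what prevents the truncated integral $\alpha\, m_n(\vO)$ from blowing up in the forward direction.
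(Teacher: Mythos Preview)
Your proof is correct and follows essentially the same approach as the paper: a truncation split for the forward direction (with $\delta=\varepsilon/(2\alpha)$) and Markov's inequality for the converse. Your observation that boundedness of $(m_n)_n$ is used only in establishing \eqref{e12} also matches the paper's remark following the proof.
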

\begin{proof}
Assume that $(f_n)_n$ is uniformly $(m_n)$-integrable on $\vO$. Let $\ve>0$ be fixed and $\alpha_0 > 0$ be such that
 $\sup_n\int_{\{|f_n|>\alpha_0\}}|f_n|\,dm_n<\ve/2$. Let $\delta=\ve/{2\alpha_0}$. If $m_n(A)<\delta$, then

\begin{eqnarray*}
\int_A|f_n|\,dm_n&=&\int_{A\cap\{|f_n|\leq\alpha_0\}}|f_n|\,dm_n+\int_{A\cap\{|f_n|>\alpha_0\}}|f_n|\,dm_n\\
&\leq&\alpha_0m_n(A)+\ve/2<\ve\,.
\end{eqnarray*}
Let
$K:= \sup_n m_n(\Omega) < +\infty$.
The inequality  \eqref{e12} is a consequence of
\[
\sup_n\int_{\vO}|f_n|\,dm_n\leq \alpha_0\sup_n m_n(\{|f_n|\leq\alpha_0\})+\ve/2
\leq  K \alpha_0 + \ve/2.
\]
Assume now the uniform absolute continuity of $(f_n)_n$ and the validity of \eqref{e12}. Let $a:=\sup_n \int_{\Omega} |f_n|\, dm_n < +\infty$.
 We have
\[
a  \geq  \sup_n\int_{\{|f_n|>\alpha\}}|f_n|\,dm_n\geq\alpha\sup_nm_n\{|f_n|>\alpha\}
\]
and so $\lim_{\alpha\to\infty}\sup_n m_n(\{|f_n|>\alpha\})=0$.
 If $\ve$ and $\delta$ are as in \eqref{Pettis1}, then there exists $\alpha_0$ such that
for all $\alpha>\alpha_0$
$
 \sup_n m_n(\{|f_n|>\alpha\})<\delta\,.
$\\
Since the sequence has uniformly absolutely continuous $(m_n)$-integrals, for all $\alpha>\alpha_0$ we get the inequality
\[
  \sup_n \int_{\{|f_n|>\alpha\}} |f_n|\,dm_m<\ve\]
and that proves the required uniform $(m_n)$-integrability.
\end{proof}

Before proceeding we would observe
that the boundedness of  $(m_n)_n$ is used only in the if part of the previous proof.
Moreover we want to highlight that
it  implies the tightly $(m_n)_n$-integrability
 of the sequence $(f_n)_n$ as observed in \cite[Formula (2.5) pag 283]{serfozo}.\\

As a consequence of the previous result we obtain
\begin{cor}\label{c2}
 Let $(m_n)_n$  be a bounded sequence of  measures.
A measurable function $f: \vO \to \mathbb{R}$  is
uniformly $(m_n)$-integrable on $\vO$ if and only if
 $f$ has uniformly absolutely continuous $(m_n)$-integrals and
\begin{equation}\label{e4}
\sup_n \int_{\Omega} |f|\, dm_n < +\infty\,.
\end{equation}
\end{cor}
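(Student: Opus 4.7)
The plan is to obtain this corollary as a direct specialization of Proposition \ref{p4} to the constant sequence of integrands $f_n := f$ for every $n \in \mathbb{N}$. Since $f$ is measurable, $(f,f,f,\ldots)$ qualifies as a sequence of real-valued measurable functions on $\vO$, and $(m_n)_n$ is assumed bounded, so Proposition \ref{p4} applies to the pair $(f_n)_n, (m_n)_n$.

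Next, I would verify that each of the three conditions appearing in Proposition \ref{p4} restricts cleanly, via the conventions stated in the (u.i.) and (u.a.c.) definitions, to the corresponding notions for a single function against a varying sequence of measures. Concretely: the condition
\[
\lim_{\alpha\to+\infty}\sup_n\int_{\{|f_n|>\alpha\}}|f_n|\,dm_n=0
\]
becomes $\lim_{\alpha\to+\infty}\sup_n\int_{\{|f|>\alpha\}}|f|\,dm_n=0$, which is exactly the uniform $(m_n)$-integrability of $f$; the uniform absolute continuity condition \eqref{Pettis1} becomes its single-function analogue for $f$; and \eqref{e12} collapses to \eqref{e4}. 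With these identifications in place, Proposition \ref{p4} yields the claimed equivalence without further argument.

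There is no real obstacle here: the result is a transcription of the previous proposition into the constant-sequence setting. The only minor point worth mentioning is that, as highlighted in the remark following the proof of Proposition \ref{p4}, the boundedness of $(m_n)_n$ is used only in the ``if'' implication, so the same asymmetry is inherited by the corollary; I would record this in passing but not belabor it.
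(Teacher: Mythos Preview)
Your proposal is correct and matches the paper's approach exactly: the paper presents Corollary~\ref{c2} simply as a consequence of Proposition~\ref{p4}, obtained by taking $f_n=f$ for all $n$, which is precisely what you do. Your remark about the boundedness of $(m_n)_n$ being needed only for the ``if'' direction is also consistent with the observation the paper makes after the proof of Proposition~\ref{p4}.
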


\begin{cor}\label{c1}
If the sequence $(m_n)_n$
 is setwise  convergent to $m$, then  a measurable function $f: \vO \to \mathbb{R}$
 has uniformly absolutely continuous $(m_n)$-integrals if and only if $f$  is uniformly $(m_n)$-integrable on $\vO$. Moreover,
\begin{equation}\label{e6}
\sup_n \int_{\Omega} |f|\, dm_n < +\infty\,.
\end{equation}
\end{cor}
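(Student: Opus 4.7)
The plan is to reduce Corollary \ref{c1} to Corollary \ref{c2}. Since $(m_n)_n\subset\mathcal{M}_+(\Omega)$ and $m_n\stackrel{s}{\to}m$, the numerical sequence $m_n(\Omega)\to m(\Omega)<+\infty$ is convergent and hence bounded, so $K:=\sup_n m_n(\Omega)<+\infty$. This is exactly the boundedness hypothesis of Corollary \ref{c2}, so it suffices to show two things: that uniform $(m_n)$-integrability implies \eqref{e6} (and hence, by Corollary \ref{c2}, uniform absolute continuity), and that in the presence of setwise convergence the uniform absolute continuity alone already forces \eqref{e6} (and hence, by Corollary \ref{c2} again, uniform $(m_n)$-integrability).

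The first implication is immediate: splitting $\int_\Omega|f|\,dm_n$ at level $\alpha$ gives $\int_\Omega|f|\,dm_n\leq\sup_n\int_{\{|f|>\alpha\}}|f|\,dm_n+\alpha K$, which is finite by the very definition of uniform $(m_n)$-integrability. So the real content is the second implication, and for this I would control $m_n\{|f|>k\}$ uniformly in $n$ for large $k$. Three standard facts are available: for each fixed $n$, $m_n\{|f|>k\}\to 0$ as $k\to\infty$ by finiteness of $m_n$; for each fixed $k$, $m_n\{|f|>k\}\to m\{|f|>k\}$ as $n\to\infty$ by setwise convergence applied to the measurable set $\{|f|>k\}$; and $m\{|f|>k\}\to 0$ as $k\to\infty$ by finiteness of $m$.

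Given the $\delta$ furnished by the uniform absolute continuity for $\varepsilon=1$, I would pick $k_0$ with $m\{|f|>k_0\}<\delta/2$, then use setwise convergence to choose $N$ with $m_n\{|f|>k_0\}<\delta$ for all $n\geq N$, and finally enlarge $k_0$ to $k^\ast$ to absorb the finitely many initial indices $n<N$ via the first bullet. This produces a single threshold $k^\ast$ with $\sup_n m_n\{|f|>k^\ast\}<\delta$. Applying the u.a.c.\ condition on the set $\{|f|>k^\ast\}$ then gives $\int_{\{|f|>k^\ast\}}|f|\,dm_n<1$ for every $n$, while $\int_{\{|f|\leq k^\ast\}}|f|\,dm_n\leq k^\ast K$. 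Adding the two pieces yields \eqref{e6}, after which Corollary \ref{c2} closes the equivalence.

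The main technical obstacle is precisely the uniform bound $\sup_n m_n\{|f|>k^\ast\}<\delta$: each of the three ingredient facts is easy in isolation, but combining them into a single threshold valid for all $n$ requires the interplay between setwise convergence (used to handle the tail in $n$) and the finiteness of each $m_n$ (used to handle the initial indices). It is exactly at this step that the hypothesis $m_n\stackrel{s}{\to}m$ is essential; everything else is a direct appeal to Corollary \ref{c2}.
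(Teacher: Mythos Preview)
Your argument is correct and follows essentially the same route as the paper: both hinge on the threshold step of producing a single level $k^\ast$ (the paper calls it $\bar a$) with $\sup_n m_n\{|f|>k^\ast\}<\delta$, obtained by using setwise convergence for the tail indices and finiteness of each $m_n$ for the finitely many initial ones. The only cosmetic difference is that the paper uses this threshold to verify uniform $(m_n)$-integrability \emph{directly} (for arbitrary $\varepsilon$) and then reads off \eqref{e6} from Corollary~\ref{c2}, whereas you first establish \eqref{e6} (taking $\varepsilon=1$) and then invoke Corollary~\ref{c2} to deduce uniform integrability; the substance is identical.
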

\begin{proof}
Assume that  $f$ has uniformly absolutely continuous $(m_n)$-integrals and $(m_n)_n$ is setwise convergent to $m$.
Given $\ve>0$ let $\delta>0$ be such that \eqref{Pettis1} is fulfilled.
For each $a >0$ let $E_{a}:= \{t \in \Omega : |f(t) |>a\}$.
 There exists $a_0$ with
$m (E_{a_0}) <\delta/2.$
 In fact if we suppose by absurd that such  $a_0 $ does not exist, then we can construct a sequence of positive numbers $(b_n)_n \uparrow +\infty$  for which
 $m(\bigcap_n E_{b_n}) = \lim_n m(E_{b_n}) \geq \delta/2$, since $E_{b_{n+1}} \subseteq E_{b_n}$ for every $n \in \mathbb{N}$.\\
So the set $\bigcap_n E_{b_n} = \{t  \in \vO: f(t) = +\infty\}$ has positive measure
which is in contradiction with the hypothesis $f: \vO \to \mathbb{R}$.
Moreover, due to the   setwise  convergence of the measures,  there exists $n_0 (a_0) \in\N$ such that $\sup_{n\geq{n_0}}m_n(E_{a_0})<\delta/2$.\\
Then, analogously,  for each $i \leq{n_0}$, there exists $a_i>0$ with $m_i(E_{a_i})<\delta/2$.
 Let $\ov{a}=\max\{a_1,\ldots,a_{n_0}, a_0\}$.\\
So, by the monotonicity,  $\sup_n\int_{E_a}|f|\,dm_n<\ve$ for every $ a \geq \ov{a}$. The inequality \eqref{e6} follows from Corollary \ref{c2}.
\end{proof}

\begin{rem} \rm
If we assume that $f$ is $ \ov{\mathbb{R}}$ valued then
	we obtain the inequality (\ref{e6})
 of Corollary \ref{c1} under the additional hypothesis $f \in L^1(m)$, or $m\{|f|=+\infty\}=0$.
\end{rem}
The first part of the following result is contained  in  \cite[Lemmata 2.2 and 2.5,  Theorem 2,4]{serfozo}.
 For  the convenience of the reader we prefer to give here a direct proof.

\begin{prop}\label{p1}
 Let $m$ and $(m_n)_n$
 be   measures
such that the sequence $(m_n)_n$ is setwise  convergent to $m$.
Moreover let $f: \vO \to \mathbb{R}$
 have uniformly  absolutely continuous $(m_n)$-integrals on $\varOmega$.
Then  $f \in L^1(m)$ and for all $A \in {\mathcal A}$	
\begin{equation}\label{convf}
	\lim_n \int_{A} f\,dm_n =  \int_{A} f\,dm.
\end{equation}
\end{prop}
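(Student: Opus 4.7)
The plan is to combine Corollary \ref{c1} (which upgrades u.a.c. to uniform $(m_n)$-integrability plus $\sup_n \int_\Omega |f|\,dm_n < +\infty$) with the bounded-convergence characterisation of setwise convergence recorded in Remark \ref{rem1}, and then handle the unbounded tail of $f$ uniformly via the uniform $(m_n)$-integrability.

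First I would establish that $f\in L^1(m)$. Set $M:=\sup_n \int_\Omega|f|\,dm_n$, which is finite by Corollary \ref{c1}. For each $k\in\N$ the truncation $|f|\wedge k$ is bounded and measurable, so Remark \ref{rem1} gives
\[
\int_\Omega (|f|\wedge k)\,dm \;=\; \lim_n\int_\Omega (|f|\wedge k)\,dm_n \;\leq\; M.
\]
Letting $k\to\infty$ and applying monotone convergence on the left yields $\int_\Omega|f|\,dm\leq M<+\infty$, hence $f\in L^1(m)$ and in particular $\int_{\{|f|>\alpha\}}|f|\,dm\to 0$ as $\alpha\to\infty$.

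Next I would prove \eqref{convf}. Fix $A\in\mathcal A$ and $\varepsilon>0$. By the uniform $(m_n)$-integrability of $f$ (Corollary \ref{c1}) and by $f\in L^1(m)$, I can choose $\alpha>0$ so large that
\[
\sup_n\int_{\{|f|>\alpha\}}|f|\,dm_n<\frac{\varepsilon}{3}
\qquad\text{and}\qquad
\int_{\{|f|>\alpha\}}|f|\,dm<\frac{\varepsilon}{3}.
\]
The truncation $g_\alpha:=f\cdot\mathbf 1_{\{|f|\leq\alpha\}}\cdot\mathbf 1_A$ is bounded and measurable, so Remark \ref{rem1} (applied to $g_\alpha$) provides $n_0$ with
\[
\left|\int_A g_\alpha\,dm_n-\int_A g_\alpha\,dm\right|<\frac{\varepsilon}{3}\qquad\text{for all }n\geq n_0.
\]
Writing $f=g_\alpha+f\cdot\mathbf 1_{\{|f|>\alpha\}}$ on $A$ and applying the triangle inequality gives
\[
\left|\int_A f\,dm_n-\int_A f\,dm\right|<\varepsilon\qquad\text{for all }n\geq n_0,
\]
which is the desired conclusion.

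The main obstacle is the first step: the hypothesis gives absolute continuity only with respect to the moving measures $m_n$, not directly with respect to $m$, so one cannot invoke a standard Vitali argument. The trick is to exploit setwise convergence on the \emph{bounded} truncations $|f|\wedge k$ to transfer the uniform $L^1$-bound from the sequence $(m_n)$ to the limit $m$, and only afterwards exploit the uniform smallness of the tails (which on the $m_n$-side is given, and on the $m$-side comes for free once $f\in L^1(m)$).
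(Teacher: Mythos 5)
Your proposal is correct and follows essentially the same route as the paper's proof: both first invoke Corollary \ref{c1} to obtain the uniform bound $\sup_n\int_\Omega|f|\,dm_n<\infty$ and the uniform $(m_n)$-integrability, deduce $f\in L^1(m)$ by passing setwise convergence through bounded (or simple) approximations of $|f|$ and applying monotone convergence, and then prove \eqref{convf} by splitting $f$ into the bounded truncation on $\{|f|\leq\alpha\}$ (handled by the bounded/dominated convergence theorem for setwise converging measures, i.e.\ Remark \ref{rem1}) and the tail on $\{|f|>\alpha\}$ (handled by uniform integrability on the $m_n$-side and by $f\in L^1(m)$ on the $m$-side). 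The only cosmetic difference is that you treat a general $A\in\mathcal A$ directly by absorbing $\mathbf 1_A$ into the bounded truncation, whereas the paper reduces to $A=\Omega$.
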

\begin{proof}
 By Corollary \ref{c1}  we have that $\sup_n \int_{\Omega} |f| dm_n < +\infty\,$.
The equality (\ref{convf}) holds for simple functions.
So if $0\leq{f_k}\nearrow|f|$ are simple, then
\begin{eqnarray}\label{e3}
\int_{\Omega}f_k dm \leq \liminf_n  \int_{\Omega} |f| dm_n \leq \sup_n \int_{\Omega} |f| dm_n \stackrel{Cor. \ref{c1}}{<} +\infty\,.
\end{eqnarray}
Now we apply the Lebesgue Monotone Convergence Theorem and  obtain that $f \in L^1(m)$.\\
We will show now that (\ref{convf}) holds. It is sufficient to prove it for $\vO$.
Let   $\varepsilon > 0$ be fixed. By the hypothesis and Corollary \ref{c1}  there exists $\alpha_{\varepsilon}$ such that
\begin{eqnarray}\label{formula}
 \sup \left\{ \int_{\{|f| >  \alpha_{\varepsilon}\}} |f| dm , \int_{\{|f| >  \alpha_{\varepsilon}\}} |f| dm_n,\,\, n \in \mathbb{N} \right\} < \varepsilon.
\end{eqnarray}

Moreover by the classical Dominated Convergence Theorem for varying measures (see e.g. \cite{royden} Ch.11 Proposition 18)
 if $A\in \mathcal A$ is fixed, then there exists $n_0$ such that for every $n>n_0$
\begin{eqnarray}\label{formula1}
\left| \int_{\{|f| \leq  \alpha_{\varepsilon}\}} f\,dm - \int_{\{|f| \leq  \alpha_{\varepsilon}\}} f\, dm_n \right| < \varepsilon.
\end{eqnarray}	
	
	Then by  \eqref{formula} and \eqref{formula1}  for  $n>n_0$ we have
\begin{eqnarray*}
	\left|\int_{\Omega} f\,dm_n - \int_{\Omega} f\,dm\right| &\leq &
	\left| \int_{\{|f|
	\leq  \alpha_{\varepsilon}\}} f dm_n - \int_{\{|f| \leq  \alpha_{\varepsilon}\}} f dm \right|+\\
	 &+& \left|\int_{\{|f| >  \alpha_{\varepsilon}\}} f dm\right| +
	 \left|\int_{\{|f| >  \alpha_{\varepsilon}\}} f dm_n\right|<	3\varepsilon,
  \end{eqnarray*}
and the thesis follows.

\end{proof}

 The next convergence  result is obtained also in  \cite[Theorems 2.7 and  2.8]{serfozo} using the thightness and the uniform 
$(m_n)_n$-integrability of $f$ and $(f_n)_n$,
and later extended in \cite[Corollary 5.3]{Fein-arX1902}, in which the above hyphoteses on $f$ are omitted.
We   give here a proof involving the uniform absolute continuity.

\begin{thm}\label{Th1}
	  Let $f, f_n: \vO \to \mathbb{R}$  be  measurable functions and let $m$ and $(m_n)_n$,   be  measures.
	 Suppose that
	 \begin{itemize} 	
	 	\item[\rm \rm(\ref{Th1}.i )]  $(f_n)_n$  has uniformly absolutely   continuous  $(m_n)$-integrals   on $\vO$; 	
	 	\item[\rm \rm(\ref{Th1}.ii)]  $f_n(t)  \rightarrow f(t)$,  in $m$-measure, as $n \rightarrow 		\infty$;
	 	\item[\rm \rm(\ref{Th1}.iii)] $f $  has uniformly absolutely   continuous  $(m_n)$-integrals     on $\vO$;
	 	\item[\rm \rm(\ref{Th1}.iv)] $(m_n)_n$ is setwise  convergent to $m$.
	 \end{itemize} 	
 	 Then, for all $A \in {\mathcal A}$, 	
	\begin{equation}\label{Th1.conv}
		 \lim_n \int_{A} f_ndm_n =  \int_{A} fdm.
	\end{equation}	
\end{thm}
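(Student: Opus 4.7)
The plan is to peel off the contribution of $f$ via Proposition \ref{p1} and then dispose of the remaining difference $\int_A(f_n-f)\,dm_n$ by combining Egoroff's theorem with the setwise convergence of $(m_n)_n$. Indeed, hypotheses (\ref{Th1}.iii) and (\ref{Th1}.iv) are precisely those of Proposition \ref{p1} applied to the single function $f$, so $f\in L^1(m)$ and $\int_A f\,dm_n\to\int_A f\,dm$ for every $A\in\mathcal{A}$. Thus the task reduces to showing $\int_A(f_n-f)\,dm_n\to 0$.

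To promote the ``$m$-measure'' convergence (\ref{Th1}.ii) to ``$m$-almost everywhere,'' I would invoke the subsequence principle: it is enough to prove that every subsequence of $\bigl(\int_A(f_n-f)\,dm_n\bigr)_n$ admits a further subsequence tending to $0$. Fixing such a subsequence and applying the Riesz theorem produces a sub-subsequence, relabelled $(f_n)$, with $f_n\to f$ $m$-a.e. Given $\varepsilon>0$, hypotheses (\ref{Th1}.i) and (\ref{Th1}.iii) furnish a common $\delta>0$ such that $m_n(B)<\delta$ implies $\int_B|f_n|\,dm_n<\varepsilon$ and $\int_B|f|\,dm_n<\varepsilon$ for every $n$. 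By Egoroff's theorem on the finite measure space $(\Omega,m)$ I would choose $B\in\mathcal{A}$ with $m(\Omega\setminus B)<\delta$ and $f_n\to f$ uniformly on $B$; setwise convergence (\ref{Th1}.iv) applied to the \emph{fixed} set $\Omega\setminus B$ then forces $m_n(\Omega\setminus B)<\delta$ for all sufficiently large $n$. Writing $K:=\sup_n m_n(\Omega)<\infty$ (also a consequence of (\ref{Th1}.iv)), the split
\[
\int_A(f_n-f)\,dm_n=\int_{A\cap B}(f_n-f)\,dm_n+\int_{A\setminus B}(f_n-f)\,dm_n
\]
yields a first summand bounded by $K\cdot\sup_B|f_n-f|\to 0$ and a second summand bounded by $2\varepsilon$ via the UAC choice of $\delta$; hence $\limsup_n|\int_A(f_n-f)\,dm_n|\le 2\varepsilon$. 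Since $\varepsilon>0$ was arbitrary, this closes the subsequence-principle loop.

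The main obstacle is the mismatch between hypothesis (\ref{Th1}.ii), which controls the level sets $\{|f_n-f|>\eta\}$ only via $m$, and the conclusion, which integrates against the varying measures $m_n$. A direct attempt to show $m_n(\{|f_n-f|>\eta\})\to 0$ would appear to require a Vitali--Hahn--Saks manoeuvre; the ``subsequence plus Egoroff'' route sidesteps this by producing a single set $B$ depending only on $\varepsilon$, whose smallness is then transferred from $m$ to each $m_n$ by the single-set statement in (\ref{Th1}.iv), after which the uniform absolute continuity hypotheses can be used without any further contortion.
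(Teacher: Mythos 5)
Your proposal is correct and follows essentially the same route as the paper's proof: Proposition \ref{p1} handles the $\int_A f\,dm_n\to\int_A f\,dm$ part, while the difference $\int_A(f_n-f)\,dm_n$ is controlled by passing to an $m$-a.e.\ convergent subsequence, applying Egoroff, transferring the smallness of the exceptional set from $m$ to $m_n$ via setwise convergence, using the two uniform absolute continuity hypotheses there, and closing with the subsequence principle. The only (immaterial) differences are bookkeeping: you bound the bad-set contribution entirely with the $(m_n)$-UAC hypotheses, whereas the paper also invokes the absolute continuity of $\int f\,dm$.
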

\begin{proof} 
From (\ref{Th1}.ii) there exists a subsequence of $(f_n)_n$ which converges $m$-a.e.
to $f$, for simplicity we denote  it again  $(f_n)_n$.
It is sufficient to prove the equality (\ref{Th1.conv}) for $A= \vO$. 	
By Proposition \ref{p1}  $f\in L^1(m)$. Fix $\varepsilon >0$.
Let
$\delta:= \min\left\{ \varepsilon, \delta(\varepsilon/6) , \delta_f(\varepsilon/6)\right\} >0$,
where $\delta(\varepsilon/6)$ satisfies (\ref{Pettis1}) with $\varepsilon/6$ and
$\delta_f(\varepsilon/6)$ is that of the absolute continuity of $\int f dm$ corresponding again to  $\varepsilon/6$.\\
By  (\ref{Th1}.ii) and  the Egoroff's Theorem, we can find a set $E \in  {\mathcal A}$ such that $f_n \rightarrow f$ uniformly on $E^c$
 and $m(E)<  \delta/2$.\\
Taking into account  (\ref{Th1}.iv) let  now $N_0 \in\N$ be such that
\begin{equation}\label{Pettis200}
|m_n(E)-m(E)| < \frac{ \delta }{2} \ \qquad\mbox{and} \ \  |m_n(E^c)-m(E^c)| < 1,
\end{equation}
for every $n>N_0$.
Moreover, since the convergence is uniform on $E^c$, let   $N_1 \in\N$ be such that

\begin{equation}\label{800}
	|f_n(t)-f(t)| < \frac{ \varepsilon }{6} \cdot \frac{ 1 }{m(E^c)+1}  ,
\end{equation}
for every $t \in E^c$ and for every $n>N_1$. Then, for every $n>N_1$,
\begin{equation}\label{1100}
	\int_{E^c}|f_n-f|dm < \frac{ \varepsilon }{6}.
\end{equation}

Therefore by (\ref{Pettis200}) and (\ref{800}) we obtain, for every for $n>\max \{N_0, N_1\}$,
\begin{equation}\label{900}
	\int_{E^c}|f_n-f|dm_n< \frac{ \varepsilon }{6} \cdot \frac{m_n(E^c )}{m(E^c)+1}
	 < \frac{ \varepsilon }{6} \cdot \frac{ m(E^c)+1}{m(E^c)+1}   =\frac{ \varepsilon }{6}.
\end{equation}
Since,  $m(E) < \delta/2$, by (\ref{Pettis200}) we have also
$m_n(E) <  \delta$
for every for $n> N_0$. Then  by (\ref{Th1}.i), for every $n> N_0$  we get
	\begin{equation}\label{1000}
	\sup \left\{ \int_E|f_n|dm_n ,  \,\, \int_E|f|\,dm \right\} <\frac{\ve}{6}\,.
	\end{equation}
Now taking into account  hypothesis (\ref{Th1}.iv) and Proposition \ref{p1},
 let $N_2$ be such that
	\begin{equation}\label{1400}
\left| \int_{\vO} f dm - \int_{\vO} f dm_n  \right| < \frac{ \varepsilon}{2}
\end{equation}
for  every for $n> N_2$.
 Therefore by   (\ref{1100}--\ref{1400}),  for $n>\max \{N_0, N_1, N_2\}$ we infer
 \begin{eqnarray*}
&& \left| \int_{\vO} f dm -   \int_{\vO} f_n dm_n \right| \leq
	 \left| \int_{\vO} (f_n -f) dm_n \right| +   \left| \int_{\vO} f dm - \int_{\vO} f dm_n  \right| \\
	&\leq& \left| \int_{E^c} (f_n -f) dm_n \right| +  \int_{E} |f_n| dm_n + \int_{E} |f| dm_n +
 \left| \int_{\vO} f dm - \int_{\vO} f dm_n  \right|\\
	&\leq&    \int_{E^c} |f_n -f| dm_n   +  \int_{E} |f_n| dm_n +
	 \int_{E} |f| dm + \left| \int_{\vO} f dm - \int_{\vO} f dm_n  \right|\\
 	&< &  \frac{ \varepsilon}{2}+\left| \int_{\vO} f dm - \int_{\vO} f dm_n  \right|
	< \frac{ \varepsilon}{2}  +\frac{ \varepsilon}{2}=\varepsilon.
 \end{eqnarray*}
 This implies that  equality (\ref{Th1.conv}) is valid for the initial sequence because if, absurdly, a subsequence
 existed in which it is not valid, there would be a contradiction.
\end{proof}


\begin{rem}\label{quest}
\rm
In light of the quoted result in \cite{Fein-arX1902}  the question  if the setwise convergence of $m_n$ to $m$, the
convergence  in $m$-measure of $f_n$ to $f$ and the uniformly absolutely   continuous  $(m_n)$-integrability
  on $\varOmega$ of $f_n$ permit to obtain the  uniformly absolutely   continuous  $(m_n)$-integrability
   on $\varOmega$ of the function $f$ arises spontaneously. \\
A partial positive answer can be given
at least under the additional hypothesis that $m_n \leq m$ for each $n$ (this means that the measures $m_n$
 are dominated by $m$ but no monotonicity is required to the sequence $(m_n)_n)$.		
Indeed let assume  conditions (\ref{Th1}.i), (\ref{Th1}.ii),  (\ref{Th1}.iv) and $m_n \leq m$ for each $n$, then
fix $\varepsilon >0$, let
$\delta>0$ be such that (\ref{Pettis1}) is satisfied and let  $A \in  {\mathcal A}$  with   $m_n(A)<\delta$.
By the Fatou Lemma for converging measures (see \cite[p.231]{royden}) we have
	\begin{eqnarray*}
 \int_A|f|\,dm_n \leq \int_A|f|\,dm \leq \liminf_n  \int_A|f_n|\,dm_n < \varepsilon.
\end{eqnarray*}	
So  $f$  has uniformly absolutely   continuous  $(m_n)$-integrals and taking into account  Proposition 2.4,
 condition (\ref{Th1}.iii) follows.\\
The same holds if $f$ is measurable and bounded thanks to the setwise convergence. What happens if $f$ is unbounded  is unknown.\\
Comparing the Feinberg-Kasyanov-Liang  result  \cite[Corollary 5.3]{Fein-arX1902}
 with Theorem \ref{Th1} we can observe also that  the hypotheses
assumed in the quoted paper imply that
\mbox{$\sup_n \int_{\Omega} |f_n| dm_n <+\infty$} which is not assumed in our theorem; instead of it  we require the condition (\ref{Th1}.iii).
\end{rem}

If we consider signed measures in $\mathcal{M}(\vO)$ we get

\begin{cor}\label{Th1s}
	  Let $f, f_n: \vO \to \mathbb{R}$  be  measurable functions and let $m$ and $(m_n)_n$,  be   measures on  $\mathcal{M}(\vO)$.
	 Suppose that
	 \begin{itemize} 	
	 	\item[\rm \rm (\ref{Th1s}.i)]  $(f_n)_n$  has uniformly absolutely   continuous  $(|m_n|)$-integrals   on $\vO$; 	
	 	\item[\rm \rm (\ref{Th1s}.ii)]  $f_n(t)  \rightarrow f(t)$, in $|m|$-measure as $n \rightarrow \infty$;
	 	\item[\rm \rm (\ref{Th1s}.iii)] $f $ is uniformly $(m_n^\pm)$-integrable on $\vO$;
	 	\item[\rm \rm (\ref{Th1s}.iv)] $(m_n^\pm)_n$ is setwise  convergent to $m^\pm$.
	 \end{itemize} 	
 	 Then, for all $A \in {\mathcal A}$, 	
	\begin{equation}\label{Th1s.conv}
		 \lim_n \int_{A} f_ndm_n =  \int_{A} fdm.
	\end{equation}	
\end{cor}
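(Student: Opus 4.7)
The plan is to reduce the signed-measure statement to two applications of Theorem \ref{Th1} for non-negative measures, one for $(m_n^+)_n$ converging to $m^+$ and one for $(m_n^-)_n$ converging to $m^-$, and then combine by linearity. Concretely, I would write
\begin{equation*}
\int_A f_n\,dm_n-\int_A f\,dm=\left(\int_A f_n\,dm_n^+-\int_A f\,dm^+\right)-\left(\int_A f_n\,dm_n^--\int_A f\,dm^-\right),
\end{equation*}
and show that each bracketed difference tends to $0$ by invoking Theorem \ref{Th1} with $(m_n^+,m^+)$ and then with $(m_n^-,m^-)$. The integrability of $f_n$ and $f$ with respect to $m_n^\pm$ and $m^\pm$ needed to make these expressions meaningful is already encoded in hypotheses (\ref{Th1s}.i) and (\ref{Th1s}.iii), combined with Proposition \ref{p4} (the sequences $(m_n^\pm)_n$ are bounded because they converge setwise to finite measures).

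The crux is verifying the four hypotheses of Theorem \ref{Th1} for the pair $(m_n^+,m^+)$ (the argument for $m_n^-$ is identical). Condition (\ref{Th1}.iv) is exactly hypothesis (\ref{Th1s}.iv). For (\ref{Th1}.ii), since $m^+\leq |m|$, any set on which $|f_n-f|>\varepsilon$ has $m^+$-measure bounded by its $|m|$-measure, so convergence in $|m|$-measure from (\ref{Th1s}.ii) implies convergence in $m^+$-measure. For (\ref{Th1}.iii), the uniform $(m_n^+)$-integrability of $f$ from (\ref{Th1s}.iii), together with Corollary \ref{c2}, gives that $f$ has uniformly absolutely continuous $(m_n^+)$-integrals.

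The genuinely non-trivial point is (\ref{Th1}.i): deducing the uniform absolute continuity of $(f_n)_n$ against $(m_n^+)_n$ from the same against $(|m_n|)_n$. Here I would use the Hahn decomposition $\Omega=P_n\cup N_n$ of $m_n$. Given $\varepsilon>0$, pick $\delta>0$ corresponding to $\varepsilon$ in (\ref{Th1s}.i). If $m_n^+(A)<\delta$, then since $m_n^+$ is concentrated on $P_n$ one has
\begin{equation*}
|m_n|(A\cap P_n)=m_n^+(A\cap P_n)=m_n^+(A)<\delta,
\end{equation*}
so by hypothesis (\ref{Th1s}.i) applied to the set $A\cap P_n$,
\begin{equation*}
\int_A |f_n|\,dm_n^+=\int_{A\cap P_n}|f_n|\,dm_n^+=\int_{A\cap P_n}|f_n|\,d|m_n|<\varepsilon.
\end{equation*}
This yields (\ref{Th1}.i) for $(m_n^+)_n$; a symmetric argument works for $(m_n^-)_n$.

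With all four hypotheses of Theorem \ref{Th1} verified for $(m_n^+,m^+)$ and $(m_n^-,m^-)$, both bracketed differences in the first display converge to $0$, and \eqref{Th1s.conv} follows. The one mild subtlety worth flagging in the write-up is the use of the Hahn decomposition to reconcile $(m_n^+)$-small sets with $(|m_n|)$-small sets; this is the only place where the signed and positive cases truly differ.
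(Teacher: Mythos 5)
Your proposal is correct and follows exactly the route the paper takes: the paper's proof is the one-line instruction to apply Theorem \ref{Th1} to the pairs $(m_n^{\pm},m^{\pm})$ and combine by linearity via the Jordan decomposition. Your write-up simply supplies the verifications the paper leaves implicit, in particular the Hahn-decomposition argument showing that uniform absolute continuity with respect to $(|m_n|)_n$ transfers to $(m_n^{\pm})_n$, which is a worthwhile detail to record.
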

\begin{proof}
It is enough to apply Remark \ref{rem2} and Theorem \ref{Th1} to the pair $(m_n^{\pm},m^{\pm})$.
\end{proof}

\section{The multivalued case for integrands}\label{quattro}
Let $X$ be a Banach space with dual $X^*$ and let 
 $B_{X^*}$ be the unit ball of $X^*$.
The symbol $c(X)$ stands for the collection of all
nonempty closed convex subsets of $X$ and $cwk(X)$ (resp. $cb(X)$) denotes
the family of all weakly compact  (resp. bounded) members of $c(X)$.  For every $C \in c(X)$ the
{\it  support function of}   $\, C$ is denoted by $s( \cdot, C)$ and
defined on $X^*$ by $s(x^*, C) = \sup \{ \langle x^*,x \rangle \colon  \  x \in C\}$, for each $x^*$. If $C, D \in cwk(X)$
 (resp. $cb(X)$)
 then
$d_H(C,D):= \sup_{||x^*||\leq1}|s(x^*,C) -s(x^*,D)|$ is the Hausdorff metric on the hyperspace $cwk(X)$ (resp. $cb(X)$).\\

Let us recall some fact on multifunctions.
Any map $\vG:\vO\to c(X)$ is called a {\it multifunction}.
 A multifunction $\Gamma$ is said to be {\it scalarly measurable} if for every $x^*\in X^*$, the function $t\to
s(x^*,\Gamma(t))$ is measurable;  $\Gamma$  is said to be {\it scalarly integrable} if for each $x^*\in X^*$, the  function $t\to
s(x^*,\Gamma(t))$ is integrable.

The multifunction $\Gamma$ is said to be  {\it Pettis integrable }   in $cwk(X)$   with respect to  a measure  $m$
if
 $\Gamma$ is scalarly integrable with respect to $m$ and for every
 $A \in  {\mathcal A}$, there exists $M_{\Gamma}(A) \in cwk(X)$ such that
\[
s(x^*,M_{\Gamma}(A))=\int_A s(x^*,\Gamma)\, dm \; {\rm for} \; {\rm all }   \;     x^*\in X^*.
\]
We set $ \int_A  \Gamma\, dm:= M_{\Gamma}(A)$.
For what concerns the multivalued integrability in $cwk(X)$ we refer, for example, to \cite{mimmo1,mimmo2,dm2020,M2}.
If $\Gamma$ is single-valued we obtain the  well known  definition for the vector functions.
As regards Pettis integrability  we refere to \cite{M,M2,mu8,pallares}.

\begin{deff}\label{def3.1}
Let  $(m_n)_n$ be  a sequence of   measures.
Moreover,  for each $n \in\N$, let $\Gamma_n :\vO \rightarrow cwk(X)$ be a multifunction  scalarly integrable with respect to $m_n$.
We say that the sequence $(\Gamma_n)_n$
 has uniformly absolutely   continuous scalar $(m_n)$-integrals   on
$\vO$  if for every $ \varepsilon  >0$
there exists $\delta >0$ such that for every $n\in\N$ and $A \in  {\mathcal A}$
\begin{equation}\label{formula-def3.1}
\hskip-.4cm	
 m_n(A)<\delta  \ \Rightarrow \
	\sup\left\{\int_A|s(x^*, \Gamma_n)|dm_n \colon \|x^*\| \leq 1\right\}< \varepsilon.\,
\end{equation}
If $\Gamma_n = \Gamma$ for every $n \in \mathbb{N}$ we obtain  the uniformly absolutely   continuous  scalar $(m_n)$-integrability
 of \, $\Gamma$ on $\Omega$.
\end{deff}

We recall that
the space $X$ is said to be {\it weakly compactly generated} (WCG) if it contains a weakly compact subset  that is linearly dense in $X$
(see for example \cite{M2}).
\begin{thm}\label{Th4}
 Let $\Gamma, \Gamma_n :\vO \rightarrow cwk(X)$, $n \in\N$,  be scalarly measurable multifunctions.
 Morover let $(m_n)_n, \ m$  be  measures. Assume that
 \begin{itemize}	
	\item[\rm \rm (\ref{Th4}.j)] the sequence  $ (\Gamma_n )_n$
		 has uniformly absolutely continuous scalar  $(m_n)$-integrals on $\vO$;
 \item[\rm \rm (\ref{Th4}.jj)]   $(m_n)_n$ setwise converges   to $m$;
	\item[\rm \rm (\ref{Th4}.jjj)]	each  multifunction $\Gamma_n$	is Pettis integrable with respect to $m_n$;
	\item[\rm \rm (\ref{Th4}.jv)]  $\Gamma$ is scalarly integrable with respect to $m$.
	\end{itemize}
	 If  for every $A\in \mathcal{A}$ and every $x^*\in{X^*}$
 \begin{equation}\label{em1}
	\lim_n \int_{A} s(x^*, \Gamma_n)\, dm_n =  \int_{A}s (x^*, \Gamma)\, dm
\end{equation}
then $\vG$ is Pettis integrable in $cwk(X)$ with respect to $m$.
\end{thm}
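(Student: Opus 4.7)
The plan is to build, for each fixed $A \in \mcA$, a set $M_{\vG}(A) \in cwk(X)$ whose support function coincides with
$$\sigma_A(x^*) := \int_A s(x^*, \vG)\,dm.$$
By hypothesis (\ref{Th4}.jv) this $\sigma_A$ is well-defined, and because $s(\cdot,\vG(t))$ is sublinear and positively homogeneous for every $t$, linearity of the integral makes $\sigma_A$ sublinear and positively homogeneous as well. Hypothesis (\ref{Th4}.jjj) provides, for each $n$, a set $M_{\vG_n}(A):=\int_A \vG_n\,dm_n \in cwk(X)$ with $s(x^*, M_{\vG_n}(A))=\int_A s(x^*, \vG_n)\,dm_n$, so the convergence (\ref{em1}) translates into $s(x^*, M_{\vG_n}(A)) \to \sigma_A(x^*)$ for every $x^* \in X^*$.

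The first step is to obtain uniform boundedness of the sets $M_{\vG_n}(A)$. Since each $x^* \mapsto s(x^*, M_{\vG_n}(A))$ is a continuous sublinear functional on $X^*$, Banach--Steinhaus applied to a pointwise convergent sequence of continuous sublinear forms yields $\sup_n \sup_{\|x^*\|\le 1} s(x^*, M_{\vG_n}(A)) \le C_A<\infty$, so all the $M_{\vG_n}(A)$ lie in a common multiple of $B_X$ and $\sigma_A$ is a Lipschitz sublinear functional on $X^*$. The natural candidate for the integral is then
\[
M_{\vG}(A):=\bigl\{x^{**}\in X^{**}\colon \langle x^{**},x^*\rangle \le \sigma_A(x^*)\ \text{ for every } x^*\in X^*\bigr\},
\]
which is bounded, convex, and weak$^*$-closed in $X^{**}$, hence weak$^*$-compact by Banach--Alaoglu.

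What remains, and where I expect the main obstacle to lie, is to verify two things: that $M_{\vG}(A)$ is actually contained in the canonical image of $X$ inside $X^{**}$ (otherwise it is not a subset of $X$ and cannot lie in $cwk(X)$), and that its support function equals $\sigma_A$ rather than merely dominating it (which is equivalent to $\sigma_A$ being weak$^*$-lower semicontinuous). My plan for this is to exploit the full strength of (\ref{Th4}.j) and (\ref{Th4}.jj): the uniform absolute continuity of the scalar integrals of $(\vG_n)_n$, combined with the setwise convergence of $(m_n)_n$, should upgrade the pointwise convergence $s(\cdot,M_{\vG_n}(A)) \to \sigma_A$ into equicontinuity on $(B_{X^*}, w^*)$. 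Since each $M_{\vG_n}(A)$ is weakly compact in $X$, Grothendieck's criterion guarantees that each $s(\cdot, M_{\vG_n}(A))$ is weak$^*$-continuous on $B_{X^*}$; an equicontinuous pointwise limit of such functions is weak$^*$-continuous, and the classical duality between $cwk(X)$ and weak$^*$-continuous sublinear functionals on $B_{X^*}$ then produces $M_{\vG}(A)\in cwk(X)$ with $s(\cdot,M_{\vG}(A)) = \sigma_A$. This identifies $M_{\vG}(A)$ as the Pettis integral of $\vG$ over $A$ and completes the proof.
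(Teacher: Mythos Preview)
Your plan breaks down at the crucial step you yourself flag as the main obstacle: the claim that hypotheses (\ref{Th4}.j) and (\ref{Th4}.jj) ``upgrade'' the pointwise convergence $s(\cdot,M_{\vG_n}(A))\to\sigma_A$ to equicontinuity on $(B_{X^*},w^*)$. Equicontinuity of the support functions $s(\cdot,M_{\vG_n}(A))$ in the weak$^*$ topology is equivalent to $\bigcup_n M_{\vG_n}(A)$ being relatively weakly compact in $X$, and nothing in the hypotheses yields this. Condition (\ref{Th4}.j) is purely measure-theoretic: it controls $\sup_{\|x^*\|\le1}\int_E|s(x^*,\vG_n)|\,dm_n$ when $m_n(E)$ is small. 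The weak$^*$ topology on $B_{X^*}$ is governed by evaluation at finitely many points of $X$; there is no mechanism linking smallness of $m_n(E)$ to weak$^*$-small neighbourhoods of a given $x_0^*$. The words ``should upgrade'' hide exactly the content of the theorem.

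The paper's proof is organised quite differently and does not attempt to build $M_{\vG}(A)$ directly. It invokes Musia{\l}'s characterisation \cite[Theorem 2.5]{M2}: $\vG$ is Pettis integrable in $cwk(X)$ if and only if the operator $T_{\vG}:X^*\to L^1(m)$, $x^*\mapsto s(x^*,\vG)$, is weakly compact \emph{and} $\vG$ is determined by a WCG subspace of $X$. Weak compactness of $T_{\vG}$ is obtained by showing that $\{s(x^*,\vG):\|x^*\|\le1\}$ is bounded and uniformly $m$-integrable; here (\ref{Th4}.j), (\ref{Th4}.jj) and (\ref{em1}) are used to transfer the uniform absolute continuity of the $\vG_n$'s to $\vG$. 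The second condition is where hypothesis (\ref{Th4}.jjj) does real work: each $\vG_n$, being Pettis integrable, is determined by a WCG subspace $Y_n$ generated by a weakly compact set $W_n$, and one takes $Y$ generated by $\sum 2^{-n}W_n$. One then checks, using (\ref{em1}) and setwise convergence, that $s(y^*,\vG)=0$ $m$-a.e.\ for every $y^*\in Y^{\perp}$, so $Y$ determines $\vG$. This WCG-determination step is essential for general $X$ and is entirely absent from your outline; without it, a bounded scalarly integrable $cwk(X)$-valued multifunction with weakly compact $T_\vG$ can still fail to be Pettis integrable.
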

 \begin{proof}
  At first we show that the operator
$T_{\Gamma}: X^* \to L^1(m)$, defined as $T_{\Gamma}(x^*)=s(x^*, \Gamma)$ is bounded.
Since the function $\Gamma$ is scalarly integrable with respect to $m$,
$\Gamma$ is Dunford-integrable
in $cw^*k(X^{**})$,  where in $X^{**}$ we consider the $w^*$-topology, and
for every $A \in \mathcal{A}$ there exists $M^D_{\Gamma} (A) \in cw^*k(X^{**})$ such that,  for every
$x^* \in X^*,$
\begin{equation} \label{D1}
s(x^*, M^D_{\Gamma} (A)) =\int_As(x^*,\Gamma) dm,
\end{equation}
see for example \cite[Theorem 3.2]{mimmo3}.\\
Moreover
 the set $\{s(x^*,\Gamma):\|x^*\|\leq1\}$ is bounded in $L^1(m)$.
 Indeed it follows by (\ref{D1})
\[\int_{\Omega} |s(x^*,\Gamma) |dm \leq
2 \sup_{A\in {\mathcal A} }\left| \int_A s(x^*,\Gamma) dm \right|=
 2 \sup_{A \in {\mathcal A} } |s(x^*,M^D_{\Gamma} (A)) | < \infty,\]
where the last equality follows from the fact that for each $x^* \in X^*$, $s(x^*,M^D_{\Gamma} ( \cdot))$ is a scalar measure.
Hence, by the Banach–Steinhaus Theorem the set $
\displaystyle{\bigcup_{A \in \mathcal{A}} }M_{\Gamma}^D(A) \subset X^{**}$ is bounded  and then
\[
\sup_{\| x^*\| \leq 1} \int_{\Omega} |s(x^*,\Gamma) |dm \leq 2 \sup \left\{ \| x \| : x \in
\bigcup_{A \in \mathcal{A}} M_{\Gamma}^D(A) \right\} < \infty.\]
Therefore  the operator $T_{\Gamma}$ is bounded. \\
Now fix $\varepsilon >0$ and $x^* \in B_{X^*}$.  By (\ref{Th4}.j) there exists
$\delta>0$ satisfying (\ref{formula-def3.1}).
Let $E \in  {\mathcal A}$ be such that $m(E) < \delta/2$ and  set
 $E^+=\{ t \in E : s(x^*, \Gamma(t)) \geq 0\}$ and
\mbox{$E^-=\{ t \in E : s(x^*, \Gamma(t)) <0\}$}.  From
 (\ref{em1}) and
(\ref{Th4}.jj) we find $ N_1 \geq N $ such that
   $m_n(E) < \delta$ for every $n \geq N_1$ and
\[ \int_{E^+} s(x^*, \Gamma)dm < \left| \int_{E^+} s(x^*, \Gamma_{N_1})dm_{N_1} \right| + \frac{\varepsilon}{2}\]
and
\[ \left|\int_{E^-} s(x^*, \Gamma)dm \right| < \left| \int_{E^-} s(x^*, \Gamma_{N_1})dm_{N_1} \right| + \frac{\varepsilon}{2}.\]
So,  by (\ref{Thmulti}.j), we get
\begin{eqnarray*}
\int_E |s(x^*, \Gamma)| dm &=& \int_{E^+} s(x^*, \Gamma)dm + \left|\int_{E^-} s(x^*, \Gamma)dm \right| \\
&<& \left| \int_{E^+} s(x^*, \Gamma_{N_1})dm_{N_1} \right| + \left| \int_{E^-} s(x^*, \Gamma_{N_1})dm_{N_1} \right| +\varepsilon\\
&\leq & \int_E |s(x^*, \Gamma_{N_1})| dm_{N_1} + \varepsilon < 2 \varepsilon
\end{eqnarray*}
\noindent  and the scalar uniform integrability  with respect to $m$ of $\Gamma$ follows.
Then the operator $T_{\Gamma}: X^* \to L^1(m)$ is weakly compact.\\
Now we are proving that $\Gamma$ is determined by a $WCG$ generated subspace of $X$.
Since, for each $n \in\N$, $\Gamma_n$ is Pettis integrable, by \cite[Teorem 2.5]{M2}, let $Y_n$ be a $WCG$ subspace of $X$ generated by a
weakly compact convex set $W_n \subset B_{X^*}$ and determining the multifunction $\Gamma_n$. The set $\sum 2^{-n} W_n$ is
a weakly compact set generating a space $Y$. We want to prove that  $\Gamma$ is determined by $Y$.\\
 Let $y^*\in{Y^{\perp}}$, let $\Omega^+=\{t  : s(y^*, \Gamma(t)) \geq 0\}$ and $A_n:=\{t \in \Omega^+ :s(y^*, \Gamma_n(t))=0\}$.
 Then $m_n(A_n)=m_n(\vO^+)$.
Let $A:=\limsup_n{A_n}=\bigcap_{k=1}^{\infty}\bigcup_{p=k}^{\infty}A_p$. Then
\begin{eqnarray*}
	m(A)&=&\lim_km(\bigcup_{p=k}^{\infty}A_n)\geq\limsup_km(A_k)\geq\limsup_km_k(A_k)\\
	&=&\limsup_km_k(\vO^+)=m(\vO^+).
\end{eqnarray*}
It follows by equality (\ref{em1}) that $s(y^*,\Gamma(t))=0$ $m$-a.e. on the set $\Omega^+$. Analogously if we denote by
 $\Omega^-=\{t  : s(y^*, \Gamma(t)) < 0\}$ it follows that  $s(y^*,\Gamma(t))=0$ $m$-a.e. on the set $\Omega^-$.
Thus, $Y$ determines the multifunction $\Gamma$ and the Pettis integrability of $\Gamma$ follows by \cite[Theorem 2.5]{M2}.
\end{proof}

As a consequence of Theorem \ref{Th4} we obtain
 \begin{thm}\label{Thmulti}
  Let $\Gamma, \Gamma_n :\vO \rightarrow cwk(X)$, $n \in\N$,  be scalarly measurable multifunctions.
 Morover let $(m_n)_n, \ m$  be  measures.
		Suppose that	
	\begin{itemize}	
	\item[\rm \rm(\ref{Thmulti}.j)] the sequence  $ (\Gamma_n )_n$
		 has uniformly absolutely continuous scalar  $(m_n)$-integrals on $\vO$;
	\item[\rm \rm (\ref{Thmulti}.jj)]  $s(x^*, \Gamma_n)  \rightarrow s(x^*, \Gamma)$,  in $m$-measure, for each $x^* \in X^*$;			
	\item[\rm \rm (\ref{Thmulti}.jjj)]  $\Gamma$ has uniformly absolutely continuous scalar  $(m_n)$-integrals on $\vO$;		
	\item[\rm \rm (\ref{Thmulti}.jv)]   $(m_n)_n$ setwise converges   to $m$;
	\item[\rm \rm (\ref{Thmulti}.v)]	each  multifunction $\Gamma_n$	is Pettis integrable with respect to $m_n$.	
	\end{itemize}
	
	Then the multifunction $\Gamma$ is Pettis integrable with respect to $m$ in $cwk(X)$ and
\[\lim_n s\left(x^*, \int_{A} \Gamma_n\, dm_n \right)=  s\left(x^*, \int_{A} \Gamma\, dm \right), \]

	for every $x^* \in X^*$ and for every $A \in {\mathcal A}$.
	\end{thm}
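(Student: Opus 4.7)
The plan is to obtain this as a corollary of Theorem \ref{Th4}: what needs to be supplied is the scalar convergence identity (\ref{em1}), and this will come from Theorem \ref{Th1} applied to the support-function sequences. Once (\ref{em1}) is in hand, Theorem \ref{Th4} will provide the Pettis integrability of $\Gamma$ in $cwk(X)$, and the final convergence of support functions of the integrals is just a restatement of (\ref{em1}) through the definition of the Pettis integral.

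First I would fix an arbitrary $x^*\in X^*$ and set $f_n:=s(x^*,\Gamma_n)$, $f:=s(x^*,\Gamma)$. These are scalar measurable functions on $\vO$ by the scalar measurability of the multifunctions, and I would verify the four hypotheses of Theorem \ref{Th1} for $(f_n)_n$, $f$, $(m_n)_n$ and $m$. Positive homogeneity of the support function in the first variable gives $|s(x^*,\Gamma_n)|=\|x^*\|\,|s(x^*/\|x^*\|,\Gamma_n)|$ for $x^*\neq0$, so the uniform absolute continuity condition (\ref{Thmulti}.j), formulated as a supremum over the unit ball of $X^*$, transfers to the single function $x^*$ up to the factor $\|x^*\|$; this gives (\ref{Th1}.i). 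Condition (\ref{Th1}.ii) is exactly (\ref{Thmulti}.jj), (\ref{Th1}.iii) is (\ref{Thmulti}.jjj) applied to the chosen $x^*$, and (\ref{Th1}.iv) is (\ref{Thmulti}.jv). Theorem \ref{Th1} then yields
\[
\lim_n \int_A s(x^*,\Gamma_n)\,dm_n \;=\; \int_A s(x^*,\Gamma)\,dm \qquad\text{for every } A\in\mathcal{A},
\]
which is precisely (\ref{em1}). As a by-product (via Proposition \ref{p1} inside the proof of Theorem \ref{Th1}), one obtains $s(x^*,\Gamma)\in L^1(m)$ for every $x^*$, hence the scalar integrability of $\Gamma$ with respect to $m$.

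Next, I would check that the remaining hypotheses of Theorem \ref{Th4} are satisfied: (\ref{Th4}.j) is (\ref{Thmulti}.j), (\ref{Th4}.jj) is (\ref{Thmulti}.jv), (\ref{Th4}.jjj) is (\ref{Thmulti}.v), and (\ref{Th4}.jv) has just been secured in the previous paragraph. Applying Theorem \ref{Th4} then gives Pettis integrability of $\Gamma$ in $cwk(X)$ with respect to $m$. Finally, by the definition of the Pettis integral,
\[
s\!\left(x^*,\int_A \Gamma_n\,dm_n\right)=\int_A s(x^*,\Gamma_n)\,dm_n \quad\text{and}\quad s\!\left(x^*,\int_A \Gamma\,dm\right)=\int_A s(x^*,\Gamma)\,dm,
\]
so the stated convergence of the support functions of the integrals is a direct translation of the scalar limit obtained from Theorem \ref{Th1}.

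The only delicate point I anticipate is the reduction of the unit-ball-type uniform absolute continuity (\ref{formula-def3.1}) to the pointwise statement required by Theorem \ref{Th1} for an individual $x^*$; this is handled by positive homogeneity as above, and the trivial case $x^*=0$ is automatic. Everything else is bookkeeping: each hypothesis of Theorem \ref{Thmulti} is placed in the appropriate slot of Theorem \ref{Th1} or Theorem \ref{Th4}, and the conclusion of Theorem \ref{Thmulti} is read off from the conclusions of those two results combined with the definition of the Pettis integral.
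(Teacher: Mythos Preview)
Your proposal is correct and follows essentially the same route as the paper: apply Theorem \ref{Th1} to the support-function sequences to obtain (\ref{em1}) and scalar $m$-integrability of $\Gamma$ (via Proposition \ref{p1}), then invoke Theorem \ref{Th4} for Pettis integrability, with the final convergence being a restatement of (\ref{em1}). Your write-up is in fact more explicit than the paper's about checking the hypotheses of Theorem \ref{Th1} (in particular the homogeneity reduction from the unit-ball condition to a fixed $x^*$), but the strategy is identical.
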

\begin{proof}
 First of all we observe that, by Theorem \ref{Th1}, we have  that for every $A \in  {\mathcal A}$	
\begin{equation}\label{em11}
	\lim_n \int_{A} s(x^*, \Gamma_n)\, dm_n =  \int_{A}s (x^*, \Gamma)\, dm.
\end{equation}
Moreover by Proposition \ref{p1} $\Gamma$ is scalarly integrable, therefore the Pettis integrability of $\vG$ with respect to $m$ is a
 consequence of Theorem \ref{Th4}.
	\end{proof}

 The following definition is a generalization of the notion of the scalar equi-convergence in measure for a sequence of scalarly measurable
 multifunctions $(\Gamma_n)_n$ (see \cite[p.852]{mu8} and  \cite{bm} for the vector case).
\begin{deff}
Let $\vG, \vG_n\colon \vO\to{cwk(X)}$ be scalarly measurable multifunctions. We say that the sequence $(\vG_n)_n$
 is  scalarly equi-convergent in measure with respect to a sequence of measures $(m_n)_n$
to  $\vG$ if, for every
$\delta>0$,
\begin{equation}\label{e80}
	\lim_n\sup_{\|x^*\|\leq1}m_n\{ t\in\vO\colon |s(x^*,\vG_n(t))-s(x^*,\vG(t))|>\delta\}=0.
\end{equation}
\end{deff}
\vskip.2cm
If in  the Theorem \ref{Thmulti} we substitute  the convergence in condition (\ref{Thmulti}.jj) with the scalar equi-convergence
 in measure and the setwise convergence of $m_n$ to $m$ with the convergence in total variation, we get a stronger result.


\begin{thm}\label{Thmulti2}   Let $\Gamma, \Gamma_n :\vO \rightarrow cwk(X)$, $n \in\N$,  be scalarly measurable multifunctions.
 Morover let $  (m_n)_n, \ m$,  be  measures.
		Suppose that	
	\begin{itemize}	
	\item[\rm (\ref{Thmulti2}.j)] the sequence  $ (\Gamma_n )_n$
		 has uniformly absolutely continuous scalar  $(m_n)$-integrals on $\vO$;
	\item[\rm (\ref{Thmulti2}.jj))]  the sequence $(\vG_n)_n$, $n\in\N$, is scalarly equi-convergent in measure with respect to 
$(m_n)_n$ and $m$ to  $\vG$;			
	\item[\rm (\ref{Thmulti2}.jjj)]    $\Gamma $   has  uniformly absolutely   continuous scalar $(m_n)$ and $m$ integrals;
	 \item[\rm (\ref{Thmulti2}.jv)]$(m_n)_n$ is convergent to $m$ in total variation;
	\item[\rm (\ref{Thmulti2}.v)]	each  multifunction $\Gamma_n$	is Pettis integrable with respect to $m_n$.	
	\end{itemize}
	
	Then the multifunction $\Gamma$ is Pettis integrable with respect to $m$ in $cwk(X)$ and
\[ \lim_n d_H(M_{\Gamma_n}(A), M_{\Gamma}(A))=0\]
	uniformly in $A \in {\mathcal A}$,
where $M_{\Gamma_n}, \, (M_{\Gamma}): \mathcal{A} \to cwk(X)$ are  the $m_n$ $(m)$-Pettis integrals of the multifunction 
$\Gamma_n \ (\Gamma)$  respectively.
	\end{thm}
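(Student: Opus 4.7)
The overall plan is first to obtain the Pettis integrability of $\vG$ as a corollary of Theorem \ref{Thmulti}, and then to upgrade the resulting pointwise convergence of support functions to a Hausdorff-distance estimate uniform in both $A$ and $x^*$, by exploiting the representation $d_H(C,D)=\sup_{\|x^*\|\leq 1}|s(x^*,C)-s(x^*,D)|$.

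For the reduction to Theorem \ref{Thmulti}, convergence in total variation (\ref{Thmulti2}.jv) implies the setwise convergence required by (\ref{Thmulti}.jv); moreover, scalar equi-convergence in measure with respect to $(m_n)_n$ combined with $|m-m_n|(\vO)\to 0$ yields, for each fixed $x^*\in X^*$,
\[
m\{|s(x^*,\vG_n)-s(x^*,\vG)|>\delta\}\leq m_n\{|s(x^*,\vG_n)-s(x^*,\vG)|>\delta\}+|m-m_n|(\vO)\to 0,
\]
which verifies (\ref{Thmulti}.jj). The remaining hypotheses transfer verbatim and Theorem \ref{Thmulti} supplies the Pettis integrability of $\vG$ in $cwk(X)$ with respect to $m$.

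For the quantitative estimate, the triangle inequality dominates $d_H(M_{\vG_n}(A),M_\vG(A))$ by $J_n^{(1)}(A)+J_n^{(2)}(A)$, where
\[
J_n^{(1)}(A):=\sup_{\|x^*\|\leq 1}\int_A|s(x^*,\vG_n)-s(x^*,\vG)|\,dm_n,\quad J_n^{(2)}(A):=\sup_{\|x^*\|\leq 1}\left|\int_A s(x^*,\vG)\,d(m_n-m)\right|.
\]
Given $\varepsilon>0$, I would control $J_n^{(1)}$ by using (\ref{Thmulti2}.j) and (\ref{Thmulti2}.jjj) to pick $\eta>0$ with $m_n(B)<\eta\Rightarrow\sup_{\|x^*\|\leq 1}\int_B(|s(x^*,\vG_n)|+|s(x^*,\vG)|)\,dm_n<\varepsilon/2$ for every $n$. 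Setting $K:=\sup_n m_n(\vO)<\infty$ (finite by (\ref{Thmulti2}.jv)) and $\delta:=\varepsilon/(2K)$, assumption (\ref{Thmulti2}.jj) furnishes $N$ with $\sup_{\|x^*\|\leq 1}m_n\{|s(x^*,\vG_n)-s(x^*,\vG)|>\delta\}<\eta$ for $n\geq N$; splitting the integrand on the exceptional set and its complement gives $J_n^{(1)}(A)\leq\varepsilon/2+\delta K=\varepsilon$, uniformly in $A$.

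For $J_n^{(2)}(A)$, a truncation at level $\alpha$ yields
\[
\left|\int_A s(x^*,\vG)\,d(m_n-m)\right|\leq \alpha|m_n-m|(\vO)+\int_{\{|s(x^*,\vG)|>\alpha\}}|s(x^*,\vG)|\,d(m_n+m),
\]
so via Chebyshev and (\ref{Thmulti2}.jjj) the whole matter reduces to the uniform bound $C:=\sup_{n,\,\|x^*\|\leq 1}\int_\vO|s(x^*,\vG)|\,dm_n<\infty$: once $C$ is in hand, $\alpha$ can be chosen to crush the tail and (\ref{Thmulti2}.jv) disposes of $\alpha|m_n-m|(\vO)$. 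Producing $C$ is the main technical obstacle of the argument. I would obtain it by combining the Banach--Steinhaus argument from the proof of Theorem \ref{Th4}, which applied to each $T_{\vG,n}\colon X^*\to L^1(m_n)$ gives finiteness of $\sup_{\|x^*\|\leq 1}\int_\vO|s(x^*,\vG)|\,dm_n$ for every individual $n$, with the following uniformity for large $n$: after separating the countable atomic part of $m$ (on each atom $a$ the contribution is controlled by $\|\vG(a)\|\sup_n m_n(\{a\})$), partition the nonatomic part into finitely many pieces of $m$-measure less than the $\delta$ produced by (\ref{Thmulti2}.jjj) for $\varepsilon=1$; total variation convergence then raises each piece to $m_n$-measure less than $\delta$ eventually, and summation over the partition yields uniform finiteness for all but finitely many $n$.
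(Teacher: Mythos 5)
Your overall architecture is sound and the first half is essentially the paper's: Pettis integrability of $\vG$ is obtained by feeding the hypotheses into the setwise/pointwise machinery (the paper routes through Serfozo's theorem and then Theorem \ref{Th4}, you route through Theorem \ref{Thmulti}; both reductions are valid, and your derivation of $m$-measure convergence from $m_n$-measure convergence plus $|m-m_n|(\vO)\to0$ is correct, indeed redundant since (\ref{Thmulti2}.jj) already postulates equi-convergence with respect to $m$). Your treatment of $J_n^{(1)}$ via the exceptional sets of the equi-convergence is exactly the paper's splitting over $H_{n,x^*}$ and $H_{n,x^*}^c$. The genuine divergence is in $J_n^{(2)}$: you truncate $s(x^*,\vG)$ itself at a level $\alpha$, which produces an $x^*$-dependent exceptional set and therefore forces you, via Chebyshev, to establish the uniform bound $C=\sup_{n,\|x^*\|\leq1}\int_\vO|s(x^*,\vG)|\,dm_n<\infty$. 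The paper avoids this entirely by invoking the control function of \cite[Proposition 1.2]{M2}: a single measurable $\varphi_\vG$ with $|s(x^*,\vG(t))|\leq\varphi_\vG(t)\|x^*\|$ a.e.\ for every $x^*$, truncated on a set $F$ with $m(F^c)<\delta$ that is \emph{independent of} $x^*$; then $\int_F\leq a|m_n-m|(F)\to0$ and $\int_{F^c}$ is killed by (\ref{Thmulti2}.jjj) once $m_n(F^c)<\delta$. That device is precisely what buys the uniformity in $x^*$ at no extra cost.

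The weak point of your version is the argument you offer for $C<\infty$. The nonatomic part (finite partition into pieces of small $m$-measure, promoted to small $m_n$-measure by total variation convergence) is fine, but the atomic part does not work as stated: atoms of $m$ on an abstract measurable space need not be singletons, so ``$\|\vG(a)\|\sup_nm_n(\{a\})$'' is not meaningful in general; even for singleton atoms the quantity $\sum_a\|\vG(a)\|\,m_n(\{a\})$ puts the supremum over $x^*$ \emph{inside} the integral, a Bochner-type quantity that scalar integrability does not control; and on an atom $B$ of $m$ the function $s(x^*,\vG)$ is only $m$-a.e.\ constant, while the exceptional $m$-null set may carry positive $m_n$-mass. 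The bound $C<\infty$ is nevertheless true and available more cheaply from the paper's own toolkit: for each fixed $x^*$, Corollary \ref{c1} gives $\sup_n\int_\vO|s(x^*,\vG)|\,dm_n<\infty$, and since each seminorm $x^*\mapsto\int_\vO|s(x^*,\vG)|\,dm_n$ is continuous on $X^*$ (the closed-graph/Dunford argument from the proof of Theorem \ref{Th4}), the uniform boundedness principle yields the supremum over $n$ and $\|x^*\|\leq1$ simultaneously. With that substitution your proof closes; as written, the atoms step is a genuine gap.
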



\begin{proof}
 For every $x^* \in X^*$ and $A \in {\mathcal A}$  it is enough to apply  \cite[Theorem 2.8]{serfozo} to get
\begin{equation*}
	\lim_n \int_{A} s(x^*, \Gamma_n)\, dm_n =  \int_{A}s (x^*, \Gamma)\, dm
\end{equation*}
and as in Theorem \ref{Thmulti} the multifunction  $\Gamma$ is Pettis integrable with respect to $m$.
In order to prove the convergence in the Hausdorff metric, fix $A \in {\mathcal A}$,  $\varepsilon>0$ and  $\delta>0 $ that satisfy \eqref{e80} also in case of
 $m_1=m_2=\ldots={m}$ and $\Gamma_1=\Gamma_2=\ldots ={\Gamma}$.
Fix also $0<\eta<\ve$. For each $n\in\N$ and $x^*\in B_{X^*}$ denote by $H_{n,x^*}$ the set
\[H_{n,x^*}:= \{t \in\vO\colon |s(x^*,\vG_n(t))-s(x^*,\vG(t))|>\eta \}.\]
By the
assumption of the scalar equi-convergence in measure with respect to $m_n$ and $m$, there exists $k\in\N$ such that  for all $n\geq k$
\[\sup_{\|x^*\|\leq 1} \max \bigg\{ m_n(H_{n,x^*}),  m(H_{n,x^*}) \bigg \}<\delta.\]
 Then, for all $n\geq{k}$ and $\|x^*\|\leq 1$
\begin{eqnarray*}
	&&\sup_{\|x^*\| \leq 1} \biggl|\int_A s(x^*,\vG_n)\,dm_n-\int_A s(x^*,\vG)\,dm  \biggr|\leq   \\
	& \leq& \sup_{\|x^*\| \leq 1} \biggl|\int_{A\cap H_{n,x^*}}s(x^*,\vG_n)\,dm_n-\int_{A\cap H_{n,x^*}}s(x^*,\vG)\,dm\biggr|+
	\\ &+& \sup_{\|x^*\| \leq 1}
\biggl|\int_{A\cap H^c_{n,x^*}}s(x^*,\vG_n)\,dm_n-\int_{A\cap H^c_{n,x^*}}s(x^*,\vG)\,dm\biggr|.
\end{eqnarray*}

Observe that, by (\ref{Thmulti2}.j-\ref{Thmulti2}.jjj) and formula (\ref{e80}), we have
\begin{eqnarray*}
	&& \sup_{\|x^*\| \leq 1}\biggl|\int_{A  \cap H_{n,x^*}}s(x^*,\vG_n)\,dm_n-\int_{A\cap H_{n,x^*}}s(x^*,\vG)\,dm\biggr| \leq \\
	&&  \sup_{\|x^*\| \leq 1}\biggl|\int_{A \cap H_{n,x^*}}s(x^*,\vG_n)\,dm_n \biggr|+
 \sup_{\|x^*\| \leq 1}\biggl|\int_{A\cap H_{n,x^*} }s(x^*,\vG)\,dm\biggr| \leq 2\ve.
\end{eqnarray*}
Relatively to the second summand we apply Remark \ref{nota} and formula (\ref{variaz}) and we obtain
\begin{eqnarray*}
	&& \sup_{\|x^*\| \leq 1} \biggl|\int_{A\cap H^c_{n,x^*}} s(x^*,\vG_n)\, dm_n - \int_{A\cap H^c_{n,x^*}} s(x^*,\vG) \,dm \biggr| \leq \\
	&& \leq  \sup_{\|x^*\| \leq 1} \biggl|\int_{A\cap H^c_{n,x^*}}s(x^*,\vG_n)\,dm_n- \int_{A\cap H^c_{n,x^*}}s(x^*,\vG)\,dm_n\biggr| + \\
	&&+  \sup_{\|x^*\| \leq 1}  \biggl| \int_{A\cap H^c_{n,x^*}} s(x^*,\vG)\, dm_n - \int_{A\cap H^c_{n,x^*}} s(x^*,\vG) \,dm\biggr| \leq \\
	&&\leq \sup_{\|x^*\| \leq 1} \eta \, m_n(A\cap H^c_{n,x^*})
	 +  \sup_{\|x^*\| \leq 1} \biggl| \int_{A\cap H^c_{n,x^*}} s(x^*,\vG) \, d (m_n-m) \biggr| \leq
	\\ &&\leq  \varepsilon \sup_n m_n (\Omega) +  \sup_{\|x^*\| \leq 1}  \int_{A\cap H^c_{n,x^*}} |s(x^*,\vG)| d |m_n-m|.
\end{eqnarray*}
Let 
$\ov{m}:= \sum_{k=1}^{\infty} |m_n-m|(E) \cdot (2^n (1+ |m_n-m|(\Omega)))^{-1}$
 be a probability measure on $\mathcal{A}$ such that $|m_n-m|\ll\ov{m}$ for every $n\in\N$.
According to \cite[Proposition 1.2]{M2} there exists a measurable function
$\varphi_{\Gamma}:\vO\to[0,\infty)$ such that for each
$x^*\in{X^*}$ the inequality
$|s(x^*,\vG(t)) |\leq \varphi_{\Gamma}(t)\|x^*\|$ holds true $\ov{m}$-a.e.
 In particular the inequality holds true also $|m_n-m|$-a.e., for each $n\in\N$ separately.\\
Let $a>0$  and $F\in  \mathcal{A}$ be such that
$\varphi_{\Gamma}(t) \chi_F\leq{a}$ $\ov{m}$-a.e. and $m(F^c)<\delta$.
Since $(m_n)_n$  converges to $m$, there exists $\N\ni\tilde{k} \geq \ov{k}$
such that $m_n(F^c)<\delta$ for every $n\geq\tilde{k}$. Then, let $\N\ni\check{k}>\tilde{k}$
be such that $|m_n-m|(F)<\ve/a$ for every $n\geq\check{k}$. If $n\geq\check{k}$, then
\begin{eqnarray*} 
&& \sup_{\|x^*\| \leq 1}  \int_{\vO} |s(x^*,\vG) |\, d |m_n-m|\\
&\leq& \sup_{\|x^*\| \leq 1}  \int_F |s(x^*,\vG) |\, d |m_n-m|+
\sup_{\|x^*\| \leq 1}  \int_{F^c} |s(x^*,\vG) \, d |m_n-m|\\
&\leq& a|m_n-m|(F)+\sup_{\|x^*\| \leq 1}  \int_{F^c} |s(x^*,\vG) |\,dm+\sup_{\|x^*\| \leq 1}  \int_{F^c} |s(x^*,\vG)| \, d m_n
\leq 3\ve.
\end{eqnarray*}
Then,
\begin{eqnarray*}
d_H(M_{\Gamma_n}(A), M_{\Gamma}(A))&=& \sup_{\|x^*\|\leq 1}|s(x^*, M_{\Gamma_n}(A)) -s(x^*,M_{\Gamma}(A))|  \\
&\leq &\sup_{\|x^*\|\leq 1}\biggl|\int_A s(x^*,\vG_n)\,dm_n-\int_A s(x^*,\vG)\,dm\biggr|\\
&\leq &3\ve+\ve\sup_nm_n(\vO).
\end{eqnarray*}
 Since $\ov{k}$  is indipendent to $A \in \mathcal{A}$, then the previous convergence is uniform with respect to $A$.
\end{proof}

\subsection{The vector case for integrands}\label{tre}
We consider now  vector valued functions which are a particular case of  the multivalued one.
Therefore Theorem \ref{Th4} can be rewritten as follows:
\begin{thm}\label{caso-v}
 Let $f, f_n :\vO \rightarrow X$, $n \in\N$,  be scalarly measurable functions.
 Morover let $(m_n)_n, \ m$  be  measures. Assume that
 \begin{itemize}	
	\item[\rm (\ref{Th4}.j)] the sequence  $ (f_n )_n$
		 has uniformly absolutely continuous scalar  $(m_n)$-integrals on $\vO$;
 \item[\rm (\ref{Th4}.jj)]   $(m_n)_n$ setwise converges   to $m$;
	\item[\rm (\ref{Th4}.jjj)]	each  function $f_n$	is Pettis integrable with respect to $m_n$;
	\item[\rm (\ref{Th4}.jv)]  $f$ is scalarly integrable with respect to $m$.
	\end{itemize}
	 If  for every $A\in \mathcal{A}$ and every $x^*\in{X^*}$
 \begin{equation}\label{em1v}
	\lim_n \int_{A} x^*f_n\, dm_n =  \int_{A} x^*f\, dm
\end{equation}
then $f$ is Pettis integrable in $X$ with respect to $m$.
\end{thm}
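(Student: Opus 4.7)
The plan is to deduce this statement directly from Theorem \ref{Th4} by identifying each vector-valued function with its singleton-valued multifunction. Specifically, I would set $\vG_n(t):=\{f_n(t)\}$ and $\vG(t):=\{f(t)\}$. Every singleton is a nonempty, convex, weakly compact subset of $X$, so $\vG_n,\vG\colon \vO\to cwk(X)$. Moreover, the support function of a singleton is linear, $s(x^*,\{y\})=\langle x^*,y\rangle$, so scalar measurability/integrability of $\vG_n$ and $\vG$ is equivalent to scalar measurability/integrability of $f_n$ and $f$, and the Pettis integrability of $f_n$ with respect to $m_n$ is equivalent to the Pettis integrability of $\vG_n$ with respect to $m_n$ (the multivalued integral being $\{\int_A f_n\,dm_n\}$). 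Under this identification, hypotheses (\ref{Th4}.j)--(\ref{Th4}.jv) and the convergence (\ref{em1v}) become precisely the hypotheses of Theorem \ref{Th4} for $(\vG_n,\vG)$.

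Applying Theorem \ref{Th4}, the multifunction $\vG$ is Pettis integrable in $cwk(X)$ with respect to $m$: for every $A\in\mathcal{A}$ there exists $M_{\vG}(A)\in cwk(X)$ with
\[
s(x^*,M_{\vG}(A))=\int_A s(x^*,\vG)\,dm=\int_A x^*f\,dm\qquad\text{for all }x^*\in X^*.
\]
It remains to show that $M_{\vG}(A)$ is a singleton, so that it can be identified with an element of $X$ and recognized as the Pettis integral of $f$ over $A$.

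This last step is where a small verification is needed, though it is not really an obstacle: the map $x^*\mapsto s(x^*,M_{\vG}(A))$ is linear in $x^*$ (since integration against $m$ is linear in the integrand), and a nonempty weakly compact convex set whose support function is linear must be a singleton. Indeed, linearity gives $s(-x^*,M_{\vG}(A))=-s(x^*,M_{\vG}(A))$, which together with sublinearity of support functions forces $M_{\vG}(A)=\{y_A\}$ for some $y_A\in X$. Then $\langle x^*,y_A\rangle=\int_A x^*f\,dm$ for every $x^*\in X^*$, which is exactly the definition of Pettis integrability of $f$ with respect to $m$, with $\int_A f\,dm=y_A$.

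The only real content of the proof is the reduction observation and the singleton identification; the heavy machinery (uniform absolute continuity, setwise convergence, WCG determination) is all absorbed by the invocation of Theorem \ref{Th4}. No additional obstacle is anticipated.
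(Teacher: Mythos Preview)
Your proposal is correct and follows exactly the approach the paper intends: the paper introduces Theorem~\ref{caso-v} with the sentence ``vector valued functions \ldots\ are a particular case of the multivalued one. Therefore Theorem~\ref{Th4} can be rewritten as follows,'' and gives no separate proof. Your singleton identification (linearity of the support function forcing $M_{\vG}(A)$ to be a point) is the only detail the paper leaves implicit, and you handle it correctly.
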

The following theorem is the vector valued formulation of Theorem \ref{Thmulti}.
 We  present here a  proof of the Pettis integrability of the limit function $f$ using a characterization of weakly compact 
sets and without using the multivalued case.

\begin{thm}\label{Th2v}
	Let $f,f_n:\vO \rightarrow X$ be scalarly measurable  functions.  Moreover let
 $(m_n)_n$ and $m$   be  measures.	
	Suppose that 	
	\begin{itemize}	
		\item[\rm (\ref{Th2v}.j)]  $(f_n)_n$ has  uniformly absolutely continuous scalar $(m_n)$- integrals on $\vO$;	
		\item[\rm (\ref{Th2v}.jj)]  $x^*f_n(t)  \rightarrow x^*f(t)$,    in $m$-measure, for each $x^* \in X^*$;		
			\item[\rm (\ref{Th2v}.jjj)] $f$ has uniformly absolutely continuous  scalar $(m_n)$-integrals on $\vO$;			
		\item[\rm (\ref{Th2v}.jv)]  $(m_n)_n$ is setwise  convergent to $m$;
		\item[\rm (\ref{Th2v}.v)]	each  function $f_n$	is  Pettis integrable with respect to $m_n$.	
	\end{itemize}	
	Then $f$ is Pettis integrable  with respect  to $m$ and 	
	\[ \lim_n \int_{\vO} f_n dm_n =  \int_{\vO}f dm, \]
	weakly in $X$.
\end{thm}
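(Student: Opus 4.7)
The plan is to reduce the vector-valued convergence to the scalar case via Theorem \ref{Th1} applied to each composition $x^*f_n$, and then to lift the resulting weak-$*$ convergence back to $X$ by means of a Dunford--Pettis type characterization of relative weak compactness. First, fix $x^*\in X^*$; hypotheses (\ref{Th2v}.j) and (\ref{Th2v}.jjj) immediately yield the uniformly absolutely continuous $(m_n)$-integrability of the scalar functions $(x^*f_n)_n$ and $x^*f$ respectively, since
\[
\int_A|x^*f_n|\,dm_n\;\leq\;\|x^*\|\cdot\sup_{\|y^*\|\leq 1}\int_A|y^*f_n|\,dm_n,
\]
and analogously for $f$. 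Combined with (\ref{Th2v}.jj) and (\ref{Th2v}.jv), Theorem \ref{Th1} gives
\[
\lim_n\int_A x^*f_n\,dm_n \;=\;\int_A x^*f\,dm\qquad\text{for every }A\in\mathcal{A},\ x^*\in X^*.
\]
Proposition \ref{p1} then yields $x^*f\in L^1(m)$ for every $x^*$, so $f$ is scalarly $m$-integrable and the Dunford integral $\nu^D\colon\mathcal{A}\to X^{**}$, $\langle x^*,\nu^D(A)\rangle=\int_A x^*f\,dm$, is well defined.

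Next, from (\ref{Th2v}.v) set $y_n^A:=\int_A f_n\,dm_n\in X$. The identity $x^*(y_n^A)=\int_A x^*f_n\,dm_n$ combined with the scalar convergence above shows that $(y_n^A)_n$ is weakly Cauchy in $X$ and weak-$*$ convergent in $X^{**}$ to $\nu^D(A)$. The heart of the proof is to check that $\nu^D(A)\in X$ for every $A$. I would establish this by showing that $\{y_n^A\}_n$ is relatively weakly compact in $X$. The argument behind Corollary \ref{c1}, applied uniformly in $\|x^*\|\leq 1$, gives that $\{x^*f:\|x^*\|\leq 1\}$ is uniformly $m$-integrable; by the Dunford--Pettis characterization of relatively weakly compact subsets of $L^1(m)$ this family is relatively weakly compact in $L^1(m)$, so that the operator $T_f\colon X^*\to L^1(m)$, $T_f(x^*)=x^*f$, is weakly compact. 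Separately, (\ref{Th2v}.j) combined with setwise convergence of $(m_n)$ produces, via a Vitali--Hahn--Saks argument, the uniform countable additivity of the vector measures $\nu_n(E):=\int_E f_n\,dm_n$, whence a Bartle--Dunford--Schwartz type result forces $\bigcup_n\nu_n(\mathcal{A})$ to be relatively weakly compact in $X$. By Eberlein--\v Smulian some subsequence of $(y_n^A)$ converges weakly in $X$; by uniqueness of the weak-$*$ limit this weak limit must coincide with $\nu^D(A)$, so $\nu^D(A)\in X$. Taking $A=\Omega$ yields the claimed weak convergence $\int_\Omega f_n\,dm_n\to\int_\Omega f\,dm$, and Pettis integrability of $f$ holds simultaneously for all $A\in\mathcal{A}$.

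The main obstacle is this last transfer: upgrading the weak-$*$ convergence of $(y_n^A)_n$ in $X^{**}$ to weak convergence in $X$ without invoking the multivalued Theorem \ref{Thmulti}. All the scalar hypotheses (\ref{Th2v}.j)--(\ref{Th2v}.jjj) together with the setwise convergence (\ref{Th2v}.jv) appear essential here, since jointly they deliver both the weak compactness of $T_f$ and the uniform countable additivity of the indefinite Pettis integrals $\{\nu_n\}$; it is the conjunction of these two facts that forces the weak-$*$ limit back into $X$ and produces the Pettis integrability. Any weakening of these hypotheses would likely require replacing the Dunford--Pettis style argument by a more delicate compactness criterion adapted to the lack of countable additivity in the varying-measure setting.
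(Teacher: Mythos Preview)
Your scalar reduction via Theorem \ref{Th1} and the verification that $T_f\colon X^*\to L^1(m)$ is weakly compact are both correct and are indeed ingredients the paper uses. The gap lies in the passage from weak-$*$ to weak convergence of $(y_n^A)_n$. You assert that uniform countable additivity of $(\nu_n)_n$ together with a ``Bartle--Dunford--Schwartz type result'' forces $\bigcup_n\nu_n(\mathcal{A})$ to be relatively weakly compact in $X$; but no such result is available. Bartle--Dunford--Schwartz concerns a \emph{single} vector measure, and uniform countable additivity of a family does not imply relative weak compactness of the union of ranges. A concrete obstruction: take $X=\ell^1$, $m_n=m$ Lebesgue on $[0,1]$, and $f_n\equiv e_n$. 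Then $\nu_n(A)=m(A)\,e_n$ have common variation $m$ (hence are uniformly countably additive and satisfy (\ref{Th2v}.j)), yet $\{\nu_n(\Omega)\}=\{e_n\}$ is not relatively weakly compact in $\ell^1$. Your compactness step never uses hypothesis (\ref{Th2v}.jj), which is exactly what rules out such examples; and the weak compactness of $T_f$, being a statement about $f$ alone, gives no direct control over the sequence $(y_n^A)_n$.

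The paper closes this gap by Grothendieck's double-limit criterion applied directly to $\{y_{n}^{\Omega}\}$. For a subsequence $(y_{n_j}^\Omega)_j$ and $(x_k^*)_k\subset B_{X^*}$ with both iterated limits existing, the limit $\alpha=\lim_k\lim_j$ equals $\lim_k\int_\Omega x_k^*f\,dm$ by the scalar convergence; the weak $L^1(m)$-compactness of $\{x_k^*f\}_k$ (here your observation on $T_f$ enters) plus Mazur's theorem produces convex combinations $w_k^*$ with a weak-$*$ cluster point $w_0^*$ so that $\alpha=\int_\Omega w_0^*f\,dm$. The other limit $\beta=\lim_j\lim_k$ is computed using that each $y_{n_j}^\Omega\in X$ (Pettis integrability of $f_{n_j}$), so $\lim_k x_k^*(y_{n_j}^\Omega)=w_0^*(y_{n_j}^\Omega)=\int_\Omega w_0^*f_{n_j}\,dm_{n_j}$, and then Theorem \ref{Th1} once more yields $\beta=\int_\Omega w_0^*f\,dm=\alpha$. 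It is this interplay---$L^1(m)$-compactness on the $x^*$-side and Pettis integrability on the $j$-side---that substitutes for the unavailable uniform Bartle--Dunford--Schwartz step you invoke.
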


\begin{proof}
Let $A \in {\mathcal A}$.  For every $x^* \in X^*$ it is enough to apply  Theorem \ref{Th1} to get
	\begin{equation}\label{Pettis8}
		 \lim_n \int_{A}x^* f_ndm_n =  \int_{A}x^*fdm.
	 \end{equation}	
Therefore to get our thesis it is enough to prove that $f$ is Pettis integrable. 	
By equality (\ref{Pettis8}) we have that the sequence $\left(\dint_{\vO}f_n dm_n \right)_n$ is weakly Cauchy.\\
By a Grothendieck characterization  of weakly compact sets (\cite{G}) and  taking into account hypothesis  (\ref{Th2v}.v),
 it is enough to show that for each $(f_{n_j})_j$ and $(x^*_k)_k \subset B_{X^*}$
\begin{equation}\label{Pettis9}
		 \alpha:= \lim_k \lim_j \left<x^*_k,  \int_{\vO} f_{n_j}dm_{n_j}\right> =
		\beta:=\lim_j \lim_k \left<x^*_k,  \int_{\vO}f_{n_j}dm_{n_j}\right>,
\end{equation}
provided all above limits exist. \\
Since the function $f$ is scalarly integrable with respect to $m$,
$f$ is Dunford-integrable and the set $\{x^*f:\|x^*\|\leq1\}$ is bounded in $L^1(m)$.
 To see it let $\nu$ be the Dunford integral of $f$. Then
the set $\{\nu(E):E\in\mathcal{A}\}$ being the range of $\mathcal{A}$ in $X^{**}$
 is a bounded set (because it is weak$^*$ bounded).\\
If $\pi=\{E_{1},...,E_{n}\}$ is a partition of $\vO$ into  pairwise
disjoint members of $\mathcal{A}$,
 and $x^*\in B_{X^*}$, then
\begin{eqnarray*}
	\sum_{E_i\in\pi}|x^*\nu(E_i)|&=&\sum_{E_i\in\pi^+}x^*\nu(E_i)
	-\sum_{E_i\in\pi^-}x^*\nu(E_i)=\\
	&=&x^*\{\sum_{E_i\in\pi^+}\nu(E_i)\}-
	x^*\{\sum_{E_i\in\pi^-}\nu(E_{i})\}\le\\
	&\leq&2\sup\{\|\nu(E)\|:\,E\in\vS\}
\end{eqnarray*}
 where
$\pi^+= \{E_i : x^*\nu (E_i) \geq 0\}$ and $\pi^-= \{E_i : x^*\nu (E_i) <0\}$,
Hence, if $x^*\in{B_{X^*}}$, then
\begin{equation}\label{e1}
\int_{\vO}|x^*f|\,dm=|x^*\nu|(\vO)\leq 2\sup\{\|\nu(E)\|:\,E\in\mathcal{A} \}<\infty.
\end{equation}	
At first we are   proving  that the sequence $(x^*_k f)_k$   has  uniformly absolutely
	continuous   integrals  with respect to $m$ on $\vO$. \\	
	Now let $E \in  {\mathcal A}$ be such that $m(E) < \delta/2$.  Set
	$E_k^+=\{t \in E: x^*_k f(t)>0\}$ and \mbox{$E_k^-=\{t \in E: x^*_k f(t)\leq 0\}$}.
	Moreover let $N_k(\varepsilon) \in\N$ be such that for all $n >N_k(\varepsilon)$
\begin{eqnarray}\label{Pettis11}
\sup \left\{ \left|\int_{E_k^+}x^*_k f_n dm_n - \int_{E_k^+}x^*_k f dm \right|,
\,
 \left|\int_{E_k^-}x^*_k f_n dm_n - \int_{E_k^-}x^*_k f dm \right| \right\} < \varepsilon. \quad
	\end{eqnarray}	
	Let $n_k >N_k(\varepsilon)$ be such that $m_{n_k}(E) \leq m(E) + \delta/2 < \delta$.	
	Therefore, taking into account hypothesis (\ref{Th2v}.j),  by (\ref{Pettis11})  we have 	
\begin{eqnarray*}
		 \int_{E} |x^*_kf| dm &=& \left| \int_{E_k^+} x^*_kf dm \right| +   \left| \int_{E_k^-} x^*_kf dm \right| \\
		  &\leq&	\left|\int_{E_k^+}x^*_k f_{n_0}dm_{n_0} - \int_{E_k^+}x^*_k fdm \right|  +\\
		&+&
		 \left|\int_{E_k^-}x^*_k f_{n_0}dm_{n_0} - \int_{E_k^-}x^*_k fdm \right|\\
		&+&\ \int_{{E_k^+}} |x^*_k f_{n_0}| d{m_{n_0}} + \ \int_{{E_k^-}} |x^*_k f_{n_0}| d{m_{n_0}}
		< 4 \varepsilon.
\end{eqnarray*}
	Since the sequence $(x^*_k f)_k$  has  uniformly absolutely   continuous
  integrals  with respect to $m$ on $\vO$  and it is bounded in $L^1(m)$, it is weakly relatively compact.
	 So we have  the existence of a subsequence  $(z^*_k)_k$ of $(x^*_k )_k$ and of a real valued function $g \in L^1(m)$ such that
	$z^*_kf \rightarrow g$ weakly in $L^1(m)$. Mazur's Theorem yields  the existence of functionals
	$w^*_k \in co\{z^*_j: \ j \geq k\}$
	such that
	\[\lim_k \int_{\vO}|w^*_k f -g|dm =0 \  \qquad\mbox{and} \  \lim_k w^*_k f =g,  \ \qquad\mbox{m-a.e.}\,.\]
	
	If $w^*_0$ is a weak$^*$-closter point of  $(w^*_k)_k$, then $g=w^*_0f$  $m$-a.e..	
	Therefore $\alpha= \int_{\vO} w_0^*fdm.$	
	On the other hand	
\begin{eqnarray*}
			\lim_k \left<x^*_k,  \int_{\vO}f_{n_j}dm_{n_j}\right>&=&
			\lim_k \left<w^*_k,  \int_{\vO}f_{n_j}dm_{n_j}\right>
			=\left<w^*_0,  \int_{\vO}f_{n_j}dm_{n_j}\right>\\ &=&
			 \int_{\vO}w^*_0f_{n_j}dm_{n_j}.
\end{eqnarray*}	

 By the hypothesis  (\ref{Th2v}.j-\ref{Th2v}.jv)  and by Theorem \ref{Th1} 	it follows	
	\[ \beta:=\lim_j \int_{\vO}w^*_0f_{n_j}dm_{n_j}= \int_{\vO}w^*_0fdm.\]
	So $\alpha=\beta$ and this completes the proof.
\end{proof}

In particular
\begin{cor}\label{Th3}
Let $(m_n)_n$ be a sequence of measures that is setwise convergent to  a measure $m$.
 If $f:\vO\to{X}$ has uniformly absolutely continuous
 scalar $(m_n)$-integrals on $\vO$, then $f$ is scalarly $m$-integrable.
 If moreover  $f$	is  Pettis integrable with respect to $m_n$ for each $n \in \mathbb{N}$, then $f$ is
Pettis integrable with respect to $m$
and for every $A \in \mathcal{A} $ it is
	\[ \lim_n \int_{A} f dm_n =  \int_{A}f dm, \]
	weakly.
\end{cor}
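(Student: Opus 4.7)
The plan is to recognize Corollary \ref{Th3} as the special case of Theorem \ref{Th2v} with the constant sequence $f_n:=f$, and to deduce the scalar $m$-integrability separately from Proposition \ref{p1} applied to $x^*f$.

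First I would establish that, for every $x^*\in X^*$, the scalar function $x^*f$ has uniformly absolutely continuous $(m_n)$-integrals on $\vO$. This is immediate from the vector-valued hypothesis: if $\delta>0$ corresponds to $\varepsilon>0$ in Definition \ref{def3.1}, then whenever $m_n(A)<\delta$ we have
$$\int_A|x^*f|\,dm_n\leq\|x^*\|\sup_{\|y^*\|\leq 1}\int_A|y^*f|\,dm_n<\|x^*\|\,\varepsilon.$$
Proposition \ref{p1} then gives $x^*f\in L^1(m)$, which is exactly the scalar $m$-integrability of $f$ asserted in the first part of the corollary, together with
$$\lim_n\int_A x^*f\,dm_n=\int_A x^*f\,dm\qquad\text{for every }A\in\mathcal{A}.$$

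Next, under the additional Pettis-integrability assumption, I would apply Theorem \ref{Th2v} with the constant sequence $f_n:=f$. All five hypotheses are at hand: (\ref{Th2v}.j) and (\ref{Th2v}.jjj) coincide with the standing assumption on $f$; (\ref{Th2v}.jj) is trivial since $x^*f_n=x^*f$ for every $n$; (\ref{Th2v}.jv) and (\ref{Th2v}.v) are the remaining hypotheses of the corollary. Theorem \ref{Th2v} therefore delivers the Pettis integrability of $f$ with respect to $m$.

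Finally, the weak convergence $\int_A f\,dm_n\to\int_A f\,dm$ for an arbitrary $A\in\mathcal{A}$ is tantamount to the scalar convergence established in the first step, once $f$ is known to be Pettis integrable against each $m_n$ and against $m$: for every $x^*\in X^*$,
$$x^*\Bigl(\int_A f\,dm_n\Bigr)=\int_A x^*f\,dm_n\longrightarrow\int_A x^*f\,dm=x^*\Bigl(\int_A f\,dm\Bigr).$$
I do not foresee a real obstacle here; the only point requiring a line of care is the elementary passage from vector-valued to scalar uniform absolute continuity at the very start, after which everything reduces to the machinery already built in Section \ref{due} and Theorem \ref{Th2v}.
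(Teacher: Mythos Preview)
Your proposal is correct and follows exactly the route taken in the paper: the scalar $m$-integrability comes from Proposition \ref{p1} applied to each $x^*f$, and the Pettis integrability of $f$ with respect to $m$ is obtained from Theorem \ref{Th2v} with the constant sequence $f_n=f$. You have simply spelled out the details the paper leaves implicit, including the weak convergence on arbitrary $A\in\mathcal{A}$ via the scalar limits from Proposition \ref{p1}.
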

\begin{proof}
The first assertion follows from Proposition \ref{p1}.
The Pettis integrability of $f$ with respect to $m$ follows from Theorem \ref{Th2v} when $f_n=f$ for every $n \in \mathbb{N}$.
\end{proof}

As in Theorem \ref{Thmulti2}, if we consider
the scalarly equi-convergence in measure
 and
the convergence in total variation,
we have
\begin{thm}\label{Th1m}
  Let $f, f_n: \vO \to X$  be  measurable functions and let
 $(m_n)_n$,  $m$ be  measures.
	 Suppose that
	 \begin{itemize} 	
	 	\item[\rm (\ref{Th1m}.i)]  $(f_n)_n$  has uniformly absolutely   continuous  scalar $(m_n)$-integrals   on $\vO$; 	
	 	\item[\rm (\ref{Th1m}.ii)]  $(f_n)_n$ is  scalarly equi-convergent in measure to $f$ with respect to $(m_n)_n$ and $m$;
	 		\item[\rm (\ref{Th1m}.iii)]   $f $   has  uniformly absolutely   continuous scalar $(m_n)$ and $m$ integrals;
	 	\item[\rm (\ref{Th1m}.iv)] $(m_n)_n$ is convergent to $m$ in total variation;
\item[\rm (\ref{Th1m}.v)]	each  function $f_n$ is  Pettis integrable with respect to $m_n$.
	 \end{itemize} 	
 	 Then,  $f$ is Pettis-integrable with respect to $m$ and
	\begin{equation*}
		 \lim_n \, \biggl\|\int_{A} f_n\,dm_n - \int_{A} f\,dm\biggr\|=0\,\,
	\end{equation*}
uniformly with respect to $\, A \in \mathcal{A}$.
\end{thm}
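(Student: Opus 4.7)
The plan is to follow the scheme of Theorem \ref{Thmulti2}, but with the Pettis integrability of the limit extracted via the single-valued weak convergence Theorem \ref{Th2v}. Since total variation convergence implies setwise convergence and scalar equi-convergence in measure in particular yields $x^*f_n \to x^*f$ in $m$-measure for every fixed $x^* \in X^*$, hypotheses (\ref{Th2v}.j)--(\ref{Th2v}.v) are all in force. Applying Theorem \ref{Th2v} therefore gives the Pettis integrability of $f$ with respect to $m$ and the weak convergence $\int_A x^* f_n\, dm_n \to \int_A x^* f\, dm$ for every $A \in \mathcal{A}$ and every $x^* \in X^*$.

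The substantive work is the upgrade to norm convergence uniform in $A$. Using $\|y\| = \sup_{\|x^*\|\leq 1} |x^*y|$ on $X$, I rewrite
\begin{equation*}
\Bigl\|\int_A f_n\, dm_n - \int_A f\, dm\Bigr\| = \sup_{\|x^*\|\leq 1}\Bigl|\int_A x^*f_n\, dm_n - \int_A x^*f\, dm\Bigr|.
\end{equation*}
Fix $\varepsilon>0$, let $\delta>0$ be a common modulus of uniform absolute continuity for $(f_n)_n$ and $f$ supplied by (\ref{Th1m}.i) and (\ref{Th1m}.iii), pick $0<\eta<\varepsilon$, and for every $x^*\in B_{X^*}$ introduce $H_{n,x^*}:=\{t\in\vO: |x^*f_n(t)-x^*f(t)|>\eta\}$. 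By (\ref{Th1m}.ii), for $n$ large, $\sup_{\|x^*\|\leq 1}\max\{m_n(H_{n,x^*}), m(H_{n,x^*})\}<\delta$. Splitting each integrand on $A\cap H_{n,x^*}$ and its complement, the piece on $A\cap H_{n,x^*}$ is bounded by $\int_{A\cap H_{n,x^*}}|x^*f_n|\, dm_n + \int_{A\cap H_{n,x^*}}|x^*f|\, dm \leq 2\varepsilon$ by uniform absolute continuity, while on $A\cap H^c_{n,x^*}$ the triangle inequality together with $|x^*f_n - x^*f|\leq \eta$ gives
\begin{equation*}
\Bigl|\int_{A\cap H^c_{n,x^*}}x^*(f_n-f)\, dm_n\Bigr| + \Bigl|\int_{A\cap H^c_{n,x^*}} x^*f\, d(m_n-m)\Bigr| \leq \eta\,\sup_n m_n(\vO) + \int_{\vO} |x^*f|\, d|m_n-m|.
\end{equation*}

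The main obstacle is the last term, which requires a uniform scalar bound on $f$. Here I would invoke a control function $\varphi_f:\vO\to[0,\infty)$ as in \cite[Proposition 1.2]{M2}, satisfying $|x^*f(t)|\leq \varphi_f(t)\|x^*\|$ almost everywhere with respect to a dominating probability measure $\ov{m}$ (built from the $|m_n-m|$'s exactly as in the proof of Theorem \ref{Thmulti2}), and choose $F\in\mathcal{A}$ and $a>0$ with $\varphi_f\chi_F\leq a$ $\,\ov{m}$-a.e. and $m(F^c)<\delta$. Total variation convergence (\ref{Th1m}.iv) gives $m_n(F^c)<\delta$ for large $n$ and $|m_n-m|(F)<\varepsilon/a$, so $\int_F|x^*f|\, d|m_n-m|\leq a|m_n-m|(F)<\varepsilon$, while the integrals $\int_{F^c}|x^*f|\, dm$ and $\int_{F^c}|x^*f|\, dm_n$ are each $<\varepsilon$ thanks to (\ref{Th1m}.iii). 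Summing the contributions yields a bound of the form $2\varepsilon + \eta\sup_n m_n(\vO) + 3\varepsilon$, independent of both $A$ and $x^*$, which establishes the required uniform convergence.
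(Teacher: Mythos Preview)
Your argument is correct and is precisely the single-valued specialization of the paper's proof of Theorem~\ref{Thmulti2}; the paper itself does not write out a separate proof for Theorem~\ref{Th1m}, treating it as an immediate particular case of Theorem~\ref{Thmulti2} (with the Hausdorff distance becoming the norm and the control function $\varphi_\Gamma$ replaced by the corresponding $\varphi_f$). Your only deviation is invoking Theorem~\ref{Th2v} directly for the Pettis integrability of $f$ rather than passing through the multivalued Theorem~\ref{Thmulti}, which is a harmless and natural shortcut.
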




\subsection{The vector case for McShane integrable integrands }\label{Mcshane}

Now we are going to examine the behavior of  McShane integrable  integrands.
 The McShane integral is  a ``gauge defined integral'' and, also in the case of the generalized
  McShane integral introduced in 1995  by D. Fremlin in a measure space  $\vO$, we  need a topology  in $\vO$.
 Briefly we recall the definition (see \cite{Frem-IJM}). For simplicity we prefer to use finite partitions. 
So our framework $\Omega$ is a compact Radon measure space.\\
Let $(\vO,  \mathcal{A})$ be a compact Radon measure space space with measure $m$ and topology $\Tau$.
 A {\it finite strict generalized McShane partition} of $\vO$ is a family $\{(A_i, t_i)\}_{i \leq p}$ such that $A_1,...,A_p$
 is a finite disjoint cover of $\vO$ by elements of $\mathcal A$ and $t_i \in \Omega, i=1, \ldots, p$. 
A {\it gauge} $\Delta$ on $\vO$ is a function
$\Delta:\vO \to \Tau$ such that $t \in \Delta(t)$ for every $t \in \vO$. We say that  a (finite strict
 generalized) McShane partition $\{(A_i, t_i)\}_{i \leq p}$ is {\it subordinated}
 to a gauge $\Delta(t)$ if $A_i \subset \Delta(t_i)$ for $i=1,...,p$.\\
A function   $f:\vO \to X$ is said to be $m$-{\it McShane} ($m$-(MS))
integrable on $\vO$ with $m$-(MS)-integral $w \in X$ if for every $\varepsilon > 0$ there exists a gauge $\Delta$
such that for each
partition $\{(A_i, t_i)\}_{i \leq p}$  subordinated to $\Delta$,
we have
\begin{eqnarray}\label{Mc100}
	\left\|\sum_{i=1}^p f(t_i)m (A_i)-w\right\|<\varepsilon.
\end{eqnarray}
We set $w:= (MS)\int_{\vO} fdm$.

\begin{deff}\label{mn-equi}
	\rm Let  $m_n$, $n=1,2,...$ be  measures.
	We say that a  sequence of $m_n$-(MS)integrable functions $f_n: \Omega \to X$
	is {\it $ (m_n)$-equi-integrable   on $\Omega $},  if for every $ \varepsilon  >0$ there exists a gauge $\Delta$ such that 	
	for every $n \in \mathbb{N}$			
	\begin{equation}
\label{Mc1010}
\left\| \sum_{i=1}^pf_n(t_i) m_n(A_i) - (MS)\int_{\Omega}f_n dm_n \right\|<\varepsilon\,
\end{equation}	
for each  partition
$\{(A_i,t_i)\}_{i \leq p}$
subordinated to $\Delta$.
\end{deff}		

If $m_n=m$ for all $n\in\N$, then we have the classical condition of equi-integrability.\\

\begin{thm}\label{ThMcSequi}
Let $m$ and $(m_n)_n$  be  measures, let $f_n:\Omega  \to X$ be $m_n-(MS)$-integrable  functions, $n \in \mathbb{N}$, and let $f:\Omega  \to X$.  	
	Suppose that
	\begin{itemize} 	
		\item[\rm (\ref{ThMcSequi}.i)]   the sequence  $(f_n)_n$ is $ (m_n)$-equi-integrable   on $\Omega $;
		\item[\rm (\ref{ThMcSequi}.ii)]  $f_n(t)  \rightarrow f(t)$,   for all $t \in \Omega $;
		\item[\rm (\ref{ThMcSequi}.iii)] $(m_n)_n$ is setwise  convergent to $m$.	
	\end{itemize} 	
	Then, $f$ is $m$-(MS)integrable and  for all $A \in {\mathcal A}$, 	
	\begin{equation}\label{Mc1020}
		\lim_n (MS)\int_{A} f_ndm_n =  (MS)\int_{A} fdm.
	\end{equation}	
Moreover if we substitute condition  {\rm (\ref{ThMcSequi}.iii)} with the convergence in total variation ($m_n\stackrel{tv}{\to}m$), then 
{ \rm (\ref{Mc1020})} holds uniformly in $A \in {\mathcal A}$.
\end{thm}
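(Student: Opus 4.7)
The plan is the classical Cauchy argument for gauge-defined integrals, reducing the question of convergence to what happens on a single fixed McShane partition; there, finiteness of the partition lets me combine the pointwise convergence of $(f_n)_n$ with the convergence of $(m_n)_n$ safely, because both limits are taken on only finitely many summands.

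Fix $\varepsilon>0$. By (\ref{ThMcSequi}.i) I would choose a gauge $\Delta$ such that for every $n\in\N$ and every McShane partition $\pi=\{(A_i,t_i)\}_{i\le p}$ subordinated to $\Delta$,
\[
\|S(f_n,\pi,m_n)-J_n\|<\varepsilon,\qquad S(f_n,\pi,m_n):=\sum_{i=1}^{p} f_n(t_i)\,m_n(A_i),\quad J_n:=(MS)\!\int_{\Omega}f_n\,dm_n.
\]
Pick one such $\pi$ and keep it fixed. Since $p$ is finite, (\ref{ThMcSequi}.ii) gives $f_n(t_i)\to f(t_i)$ and (\ref{ThMcSequi}.iii) gives $m_n(A_i)\to m(A_i)$ for each $i$, so $S(f_n,\pi,m_n)\to S(f,\pi,m)$. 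A triangle inequality then gives $\|J_n-J_k\|<3\varepsilon$ for $n,k$ large, hence $(J_n)_n$ is Cauchy in $X$ with some limit $w$. Applying a second triangle inequality
\[
\|S(f,\pi',m)-w\|\le\|S(f,\pi',m)-S(f_n,\pi',m_n)\|+\|S(f_n,\pi',m_n)-J_n\|+\|J_n-w\|
\]
to an arbitrary $\pi'$ subordinated to $\Delta$, with $n$ chosen large depending on $\pi'$, produces $\|S(f,\pi',m)-w\|\le 3\varepsilon$. Thus $f$ is $m$-(MS) integrable with $w=(MS)\int_\Omega f\,dm$, establishing (\ref{Mc1020}) for $A=\Omega$. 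For a general $A\in\mathcal{A}$ I would run the same argument on $(f_n\chi_A)_n$ and $f\chi_A$; pointwise convergence and (\ref{ThMcSequi}.iii) carry over trivially, and the equi-integrability of $(f_n\chi_A)_n$ is extracted from that of $(f_n)_n$ via a Saks--Henstock-type partial-sum lemma.

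For the uniform conclusion under $m_n\stackrel{tv}{\to}m$ I would sharpen the key estimate by splitting
\[
S(f_n,\pi,m_n)-S(f,\pi,m)=\sum_{i=1}^p (f_n(t_i)-f(t_i))\,m_n(A_i)\;+\;\sum_{i=1}^p f(t_i)\,(m_n(A_i)-m(A_i)),
\]
and noting that the second sum, restricted to any measurable $A$, is bounded by $M_\pi\,|m_n-m|(\Omega)$ with $M_\pi:=\max_i\|f(t_i)\|$ depending only on the fixed $\pi$ and not on $A$. The main obstacle is treating the first sum uniformly in $A$: for this I would apply Egoroff's theorem on the finite measure $m$ to obtain a set $F\in\mathcal{A}$ with $m(\Omega\setminus F)$ arbitrarily small and $f_n\to f$ uniformly on $F$; total-variation convergence then forces $m_n(\Omega\setminus F)$ uniformly small, and equi-integrability (again via Saks--Henstock) controls the contribution of $\Omega\setminus F$. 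Substituting these $A$-uniform bounds into the same Cauchy/triangle scheme yields $\sup_{A\in\mathcal{A}}\|(MS)\!\int_A f_n\,dm_n-(MS)\!\int_A f\,dm\|\to 0$.
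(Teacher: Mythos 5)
Your proposal follows essentially the same route as the paper: the Saks--Henstock restriction of the equi-integrability estimate to an arbitrary $A\in\mathcal{A}$ (the paper cites Fremlin's Theorem 1N for exactly this), the passage to the limit on a single fixed finite partition using pointwise convergence of the $f_n$ at the finitely many tags together with setwise convergence of the $m_n$, the Cauchy argument for $\bigl((MS)\int_A f_n\,dm_n\bigr)_n$, and the closing triangle inequality identifying the limit as the $m$-McShane integral of $f$ over $A$. The only real divergence is in the total-variation part, where your Egoroff step is both unnecessary and risky: the sum $\sum_{i\le p}(f_n(t_i)-f(t_i))\,m_n(A_i\cap A)$ involves only the $p$ fixed tags of the fixed partition, so it is bounded by $\max_{i\le p}\|f_n(t_i)-f(t_i)\|\cdot\sup_n m_n(\Omega)\to 0$ uniformly in $A$ with no further argument, whereas Egoroff would require measurability of $\|f_n-f\|$, which is not guaranteed for merely McShane (hence Pettis) integrable, not necessarily strongly measurable, functions. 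The paper instead simply observes that under $m_n\stackrel{tv}{\to}m$ the index $n_0$ in the fixed-partition estimate can be chosen independently of $A$, which is precisely the finiteness observation above combined with your (correct) bound $\max_{i\le p}\|f(t_i)\|\cdot|m_n-m|(\Omega)$ for the second sum.
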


\begin{proof} 
Let $A \in \mathcal{A}$ be fixed.
If $\Delta $ is the gauge on $\Omega$ satisfying (\ref{ThMcSequi}.i) corresponding to the value $\varepsilon >0$, then for any $n \in \mathbb N$
\begin{equation} \label{Mc1030}
\left\| \sum_{i=1}^p f_n(t_i) m_n(A_i \cap A) - (MS)\int_{A}f_n dm_n \right\|
<\varepsilon
\end{equation}
for every
partition $\{(A_i, t_i) \}_{i \leq p}$ of $\Omega$
subordinated to $\Delta$ (see \cite[Theorem 1N]{Frem-IJM}).
Since the partition is fixed the pointwise convergence of $f_n$ to $f$ and the setwise convergence of $m_n$ to $m$ imply
that
\begin{eqnarray}\label{perA}
\lim_{n \to \infty}\sum_{i=1}^pf_n(t_i)m_n(A_i \cap A)=\sum_{i=1}^pf(t_i)m(A_i \cap A).
\end{eqnarray}
Choose $n_0 \in \mathbb N$ so that if $n,s >n_0$
\[\left\|\sum_{i=1}^pf_n(t_i)m_n(A_i \cap A) - \sum_{i=1}^pf_s(t_i)m_s(A_i \cap A) \right\|< \varepsilon.\]
Then we have
\begin{eqnarray*}
&&\left\| (MS)\int_{A} f_n dm_n -  (MS)\int_{A}f_sdm_s \right\|\\
&\leq&\left\| (MS)\int_{A}f_n dm_n -  \sum_{i=1}^pf_n(t_i)m_n(A_i \cap A) \right\| +\\
&+&
 \left\|\sum_{i=1}^p f_n(t_i)m_n(A_i) - \sum_{i=1}^p f_s(t_i)m_s(A_i \cap A) \right\|+\\
&+&\left\| \sum_{i=1}^p f_ s(t_i)m_s(A_i \cap A) -
(MS)\int_{A} f_sdm_s \right\| <3 \varepsilon,
\end{eqnarray*}
which shows that the sequence $\left((MS)\int_{A} f_n dm_n \right)_n$ is Cauchy, therefore it converges to $x_A \in X$.
Then we have 
\begin{eqnarray*}
&&\left\|\sum_{i=1}^p f(t_i)m(A_i \cap A) -x_A \right\| \leq
\left\|\sum_{i=1}^p f(t_i)m(A_i \cap A) - \sum_{i=1}^p f_n(t_i) m_n(A_i \cap A) \right\| \\
&+& \left\| \sum_{i=1}^p f_ n(t_i)m_n(A_i \cap A) -(MS)\int_{A} f_n dm_n \right\| +\left\|(MS)\int_{A} f_n dm_n  -x_A \right\| < 3 \varepsilon.
\end{eqnarray*}
Therefore it follows that $f$ is $m$-(MS)integrable on $A$ and
\[\lim_n (MS)\int_{A} f_ndm_n =  (MS)\int_{A} fdm.\]
Finally,
if $m_n\stackrel{tv}{\to}m$, $n_0$ does not depend on $A$. Then formula (\ref{perA})  holds uniformly on ${\mathcal A}$
and the convergence in formula (\ref{Mc1020}) is uniform.
\end{proof}
We now  can consider the variety of multifunctions, including not only those
with weakly compact values but also those with bounded closed convex  values. We remember that
\begin{deff}
A multifunction  $\Gamma:\Omega \to cb(X)$ is said to be  {\it $m$-McShane}
   integrable on $\Omega$,  if there exists   $\varphi_{\Gamma} \in cb(X)$
    such that for every $\varepsilon > 0$ there exists a gauge $\Delta$ on $\Omega$
such that for each   partition
   $\{(A_i,t_i)\}_{i \leq p}$ of
   $\Omega$ subordinated to $\Delta$, we have
\begin{eqnarray}\label{e14}
d_H \left(\varphi_{\Gamma},\sum_{i=1}^p\Gamma(t_i)m(A_i)\right)<\ve\,.
\end{eqnarray}
\end{deff}
Since the McShane integral is  ``gauge defined", by a R{\aa}dstr\"{o}m embedding  it is immediate to extend the
 previous  Theorem \ref{ThMcSequi} for vector valued functions to the  McShane  integrable multifunctions.
A   R{\aa}dstr\"{o}m embedding theorem
states that the nonempty closed convex subsets of a Banach  space $X$ can be identified with points of
 $\ell_{\infty}(B_{X^*})$ such that the embedding map $i:
cb(X) \to \ell_{\infty}(B_{X^*})$
is additive, positively homogeneous, and isometric (see for example \cite{L1, DM2014,CASCALES2,mimmo0}).
This allows us to reduce the McShane integrability of multifunctions
to the McShane integrability of functions by embedding
$cb(X) \hookrightarrow \ell_{\infty}(B_{X^*})$.
The key point is that $i(cb(X))$ is a closed
 cone, consequently,  if $z\in \ell_{\infty}(B_{X^*})$ is the value of the  integral of $i\circ\vG$,
 then there exists a set $I\in{cb(X)}$ with $i(I)=z$.\\

In the following theorem the notion of $ (m_n)$-equi-integrability for multifunctions is analogous to that we have for functions
(Definition \ref{mn-equi}).

\begin{thm}\label{ThMc}
Let $m$ and $(m_n)_n$   be  measures, let $ \vG_n:\Omega  \to cb(X)$  be $m_n-(MS)$-integrable  multifunctions, 
$n \in \mathbb{N}$, and let $ \vG:\Omega  \to cb(X)$.  	
Suppose that
	\begin{itemize} 	
		\item[\rm (\ref{ThMc}.i)]   the sequence  $(\vG_n)_n$ is $ (m_n)$-equi-integrable   on $\Omega $;
		\item[\rm (\ref{ThMc}.ii)]  $\lim_{n \to \infty} d_H(\vG_n(t),\vG(t)) = 0$,   for each $t \in \Omega $;
		\item[\rm (\ref{ThMc}.iii)] $(m_n)_n$ is setwise  convergent to $m$.	
	\end{itemize} 	
	Then, $\vG$ is $m$-(MS)integrable and  for all $A \in {\mathcal A}$, 	
	\begin{equation}\label{Mc1025}
		\lim_n (MS)\int_{A} \vG_n\,dm_n =  (MS)\int_{A} \vG\,dm.
	\end{equation}	
Moreover if we substitute condition  {\rm (\ref{ThMc}.iii)} with the convergence in total variation ($m_n\stackrel{tv}{\to}m$), then 
{ \rm (\ref{Mc1025})} holds uniformly in $A \in {\mathcal A}$.
\end{thm}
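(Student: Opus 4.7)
The plan is to reduce the multivalued statement to the vector-valued Theorem \ref{ThMcSequi} via the R{\aa}dstr\"{o}m embedding $i\colon cb(X)\to\ell_\infty(B_{X^*})$, exploiting the facts that $i$ is additive, positively homogeneous, isometric (so $\|i(C)-i(D)\|=d_H(C,D)$), and that $i(cb(X))$ is a closed cone in $\ell_\infty(B_{X^*})$. Under these properties, every McShane Riemann sum $\sum_{k=1}^p \vG_n(t_k)m_n(A_k)$ is sent by $i$ to the corresponding sum $\sum_{k=1}^p (i\circ\vG_n)(t_k)m_n(A_k)$, and the Hausdorff-metric inequality (\ref{e14}) becomes the norm inequality (\ref{Mc100}). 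Consequently $\vG_n$ is $m_n$-(MS)-integrable if and only if $i\circ\vG_n$ is, with $i\bigl((MS)\dint_A\vG_n\,dm_n\bigr)=(MS)\dint_A(i\circ\vG_n)\,dm_n$.

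First I would verify that the hypotheses of Theorem \ref{ThMcSequi} are inherited by the sequence $(i\circ\vG_n)_n$ with limit $i\circ\vG$. Hypothesis (\ref{ThMc}.i) carries over verbatim: a gauge $\Delta$ realising $(m_n)$-equi-integrability of $(\vG_n)_n$ in the Hausdorff metric realises the corresponding property for $(i\circ\vG_n)_n$ in norm, by isometry. Hypothesis (\ref{ThMc}.ii) yields pointwise convergence of $i\circ\vG_n$ to $i\circ\vG$ in $\ell_\infty(B_{X^*})$, again by isometry. Hypothesis (\ref{ThMc}.iii) on the measures is unchanged. So Theorem \ref{ThMcSequi} applies and provides, for every $A\in\mcA$, an element $z_A\in\ell_\infty(B_{X^*})$ such that $i\circ\vG$ is $m$-McShane integrable on $A$ with integral $z_A$, and such that
\[
\lim_n\Bigl\|(MS)\dint_A(i\circ\vG_n)\,dm_n-z_A\Bigr\|=0.
\]

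The main technical point is to pull the limit $z_A$ back into $cb(X)$. Since $i(cb(X))$ is a closed cone, and the approximating Riemann sums $\sum_k\vG(t_k)m(A_k)$ lie in $cb(X)$, their $i$-images lie in $i(cb(X))$; the norm limit $z_A$ therefore belongs to $i(cb(X))$, so there is a unique $I_A\in cb(X)$ with $i(I_A)=z_A$. Because the gauge $\Delta$ controlling the McShane sums for $i\circ\vG$ pulls back through $i$, the element $I_A$ satisfies (\ref{e14}), i.e.\ it is the $m$-McShane integral of $\vG$ on $A$. The convergence (\ref{Mc1025}) then follows from the isometry: $d_H\bigl((MS)\dint_A\vG_n\,dm_n,\,I_A\bigr)=\bigl\|(MS)\dint_A(i\circ\vG_n)\,dm_n-z_A\bigr\|\to 0$.

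For the final assertion, if $m_n\stackrel{tv}{\to}m$ then the last paragraph of the proof of Theorem \ref{ThMcSequi} shows that the norm convergence of the vector integrals holds uniformly in $A\in\mcA$; via the isometry this uniformity is transported to $d_H$, giving uniform convergence in (\ref{Mc1025}). The only non-routine step is the closedness of the cone $i(cb(X))$, which guarantees the existence of the pullback $I_A$; this is a standard property of the R{\aa}dstr\"{o}m embedding and is precisely the fact highlighted in the text preceding the theorem, so no additional argument is required.
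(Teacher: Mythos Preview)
Your proposal is correct and follows exactly the paper's approach: reduce to Theorem~\ref{ThMcSequi} by composing with the R{\aa}dstr\"{o}m embedding $i$, using that $i$ is additive, positively homogeneous and isometric (so McShane sums and equi-integrability transfer), and invoking the closedness of the cone $i(cb(X))$ to pull the limit integral back to $cb(X)$. The paper compresses all of this into a single displayed chain of equalities, whereas you have spelled out the verification of hypotheses and the pullback step, but the argument is identical.
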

\begin{proof} We apply Theorem \ref{ThMcSequi} and the  R{\aa}dstr\"{o}m  embedding since
\begin{eqnarray*}
&&\lim_{n \to \infty}  i \circ \left( (MS) \int_A \Gamma_n \, dm_n \right) = \lim_{n \to \infty}
(MS) \int_A i \circ \Gamma_n \, dm_n = \\
&& (MS) \int_A i \circ \Gamma dm =
 i \circ \left( (MS) \int_A \Gamma\,  dm \right).
\end{eqnarray*}
\end{proof}

{\bf \noindent Conclusions} Convergence results for varying measures are obtained both in weak and strong sense making use of the setwise
 convergence and the convergence in total variation respectively. By means of the Pettis integrability we are able 
to obtain the vector case as a particular case of the multivalued one. When we consider the McShane integrability
 we are able to pass from the vector case to the multivalued one.
 \\

{\bf \noindent Author's contribution}
All  authors  have  contributed  equally  to  this  work  for  writing,  review  and  editing. All authors have read and agreed
 to the published version of the manuscript.\\



{\bf \noindent Funding}
This research has been accomplished within the UMI Group TAA “Approximation Theory and Applications”, the  G.N.AM.P.A. of INDAM and the  the
 Universities of Perugia and Palermo. \\ 

It was  supported by
by Ricerca di Base 2018 dell'Universit\`a degli Studi di Perugia - "Metodi di Teoria dell'Approssimazione, Analisi Reale, 
Analisi Nonlineare e loro Applicazioni";
 Ricerca di Base 2019 dell'Universit\`a degli Studi di Perugia - "Integrazione, Approssimazione, Analisi Nonlineare e loro Applicazioni"; 
"Metodi e processi innovativi per lo sviluppo di una banca di immagini mediche per fini diagnostici" funded by the Fondazione 
Cassa di Risparmio di Perugia (FCRP), 2018; "Metodiche di Imaging non invasivo mediante angiografia OCT 
sequenziale per lo studio delle Retinopatie degenerative dell'Anziano (M.I.R.A.)", funded by FCRP, 2019; 
 F.F.R. 2021 dell'Universit\`a degli Studi di Palermo: Marraffa.\\


\Addresses

\vskip1cm 

\noindent LICENCE: CC BY-NC-ND \\
\href{https://doi.org/10.1016/j.jmaa.2022.126782}{https://doi.org/10.1016/j.jmaa.2022.126782}

\end{document}